\begin{document}

\title{Centrally endo-AIP modules}

\author[Shiv Kumar, Ashok Ji Gupta]{Shiv Kumar\affil{1}, Ashok Ji Gupta\affil{2}\comma\corrauth}

\address{\affilnum{1}\ Indian Institute of Technology, Banaras Hindu University\\ Varanasi-221005, Uttar Pradesh, INDIA\\
\affilnum{2}\ Indian Institute of Technology, Banaras Hindu University\\ Varanasi-221005, Uttar Pradesh, INDIA}

\emails{{\tt shivkumar.rs.mat17@itbhu.ac.in}\ (Shiv Kumar), {\tt agupta.apm@itbhu.ac.in}\ (Ashok Ji Gupta)}

\begin{abstract}
In this paper, we introduce the concept of centrally endo-AIP modules. We call a module $M$ centrally endo-AIP, if the left annihilator of any fully invariant submodule $N$ of $M$ in the endomorphism ring $S=End_R(M)$ is a centrally s-unital ideal of $S$. We discuss some properties of centrally endo-AIP modules. We also study the endomorphism ring of centrally endo-AIP modules and characterize quasi-Baer modules in terms of centrally endo-AIP modules.
\end{abstract}

\keywords{centrally AIP ring; centrally endo-AIP module; fully inavriant submodule, endomorphism ring.}
\ams{AMS classification codes : 16D10, 16D40, 16S50}

\maketitle

\section{Introduction }
Throughout this paper, we consider all the rings are associative with unity and all modules are right unitary, unless otherwise stated. I. Kaplansky \cite{IK} called a ring $R$, Baer (quasi-Baer) if the right annihilator of any subset (ideal) of $R$ is generated as a right ideal by an idempotent element of $R$ and introduced these notions to study some properties of von Neumann regular algebras, $AW^*$-algebras and $C^*$-algebras. Many researchers focused on the Baer rings because it has a root in functional analysis and has a close link to $C^*$-algebras as well as to von Neumann algebras. When the right (left) annihilator of any element of $R$ in $R$ is a direct summand of $R$, then $R$ is called a right (left) Rickart ring or right (left) PP ring \cite{CE}. Birkenmeier et al. \cite{Birken} gave another generalization of a Baer ring, which is known as a principally quasi-Baer ring (p.q.-Baer) and called a ring $R$ principally quasi-Baer (p.q.-Baer) if right annihilator of every principal ideal is generated by an idempotent element of $R$. An ideal $I$ of a ring $R$ is called right (left) s-unital ideal of $R$ if for each $a\in I$, $ax=a$ (resp. $xa=a$) for some element $x\in I$ (see \cite{ZR}). Also an ideal is said to be a centrally s-unital ideal of $R$ if for every $a\in I$, there exists a central element $z\in I$ such that $az=a$. Furthermore, a submodule $N$ of a right $R$-module $M$ is called a pure submodule of $M$, if the sequences $0\rightarrow N\rightarrow M$ and $0\rightarrow K\otimes N\rightarrow K\otimes M$ remain exact for every left $R$-module $K$ \cite{PM}. The condition for a right $R$-module $M$ to be flat is that whenever $0\rightarrow N_1\rightarrow N_2$ is exact for left $R$-modules $N_1$ and $N_2$ then $0\rightarrow M\otimes N_1\rightarrow M\otimes N_2$ is also exact. By using the concept of s-unital ideal, Liu and Zhao \cite{ZR}, defined a generalised structure of $PP$ rings and p.q.-Baer rings. According to them, a ring $R$ is AIP (APP) if the right (left) annihilator of every ideal (resp. principal ideal) $I$ of $R$ is pure right (left) ideal of $R$ or the right (left) annihilator of any ideal (resp. principal ideal) $I$ of $R$ is a left (right) s-unital ideal of $R$. The class of AIP rings properly contains the class of PP rings and p.q.-Baer rings, which are contained in the class of APP rings. They also introduced centrally AIP rings which contains PP rings and abelian p.q.-Baer rings (see \cite{AAK}). A ring $R$ is called a centrally left AIP-ring if left annihilator of every ideal $I$ of $R$, is a centrally s-unital ideal of $R$. 
P.A. Dana and A. Moussavi \cite{DM}, introduced the module theoretical notion AIP and APP rings as endo-AIP and endo-APP modules. A module $M$ is said to be an endo-AIP (endo-APP) if the left annihilator of every fully invariant (resp. cyclic) submodule of $M$ is a right s-unital ideal of $S$ or a pure left ideal of $S$.\\

In this paper, we introduce module theoretical notion of centrally AIP rings as centrally endo-AIP modules. An $R$-module $M$ is said to be a centrally endo-AIP module if left annihilator of every fully invariant submodule of $M$ in $S=End(M)$ is a centrally s-unital ideal of $S$. Every abelian Rickart module is a centrally endo-AIP module and every centrally endo-AIP module is an endo-AIP module (see, Proposition \ref{CEE1.1}). We show that centrally endo-AIP module is closed under direct summand (see Proposition \ref{DS1.1}). In general direct sum of centrally endo-AIP modules need not be centrally endo-AIP. We find the conditions for which the direct sum of centrally endo-AIP modules is centrally endo-AIP (see Proposition \ref{P2.17} and Theorem \ref{T2.18}). We also prove that every projective $R$-module is centrally endo-AIP if and only if $R$ is a centrally AIP ring (see Theorem \ref{Free1.1}).\\

In section 3, we study the endomorphism ring of centrally endo-AIP modules. The endomorphism ring of centrally endo-AIP module is centrally AIP ring or semiprime ring (see Proposition \ref{P3.1}). Further, we show that for a locally quasi-retractable module $M$, the ring of endomorphisms $S=End_R(M)$ is centrally AIP iff $M$ is centrally endo-AIP module (see Proposition \ref{LPQ1.1}). Also, we prove that the endomorphism ring $S=End_R(M)$ of a centrally endo-AIP module $M$ is a quasi-Baer if $S$ has a finite left uniform dimension (see Proposition \ref{UD1.1})\\

We fix the notations $\subseteq $, $\leq$, $\leq ^{\oplus}$, $\leq ^{e}$, $\trianglelefteq$ and $\trianglelefteq ^p$ to denote a subset, a submodule, a direct summand, an essential submodule, a fully invariant submodule (or an ideal) and a projection invariant submodule respectively. For an $R$-module $M$ with endomorphism ring $S=End_R(M)$, $r_M(I)$ (where $I$ is a left ideal of $S$) and $l_S(N)$ (where $N\leq M$) will denote the annihilator of $I$ in $M$ and annihilator of $N$ in $S$ respectively. An idempotent element $e^2=e\in S$ is said to be left (right) semicentral if for every $x\in S$ $exe=xe$ ($exe=ex$). $\mathbb{S}_l(S)$ ($\mathbb{S}_r(S)$) denotes the set of all left (right) semicentral idempotent elements of $S$. By a regular ring, we always mean a von Neumann regular and $T_n(R)$ stands for $n\times n$ triangular matrix ring over a ring $R$. Before proceed to the main section, we recall some definitions and results which will be helpful to the clarity of further results.\\
We recall the following definitions from \cite{ORM}.
\begin{definition}
	Let $M$ be an $R$-module with $S=End_R(M)$.
	\begin{enumerate}
		\item[(i)] $M$ is said to be reduced if for each $\phi \in S$ and $m\in M$, $\phi(m)=0$ implies $Im(\phi)\cap Sm=0$ . Equivalently, $M$ is a reduced module if $\phi^2(m)=0$ implies $\phi S(m)=0$.
		\item[(ii)] $M$ is called a rigid module if for every $\psi \in S$ and $m\in M$, $\psi^2(m)=0$ implies $\psi (m)=0$ . Equivalently $Ker(\psi)\cap Im(\psi)=0$ for every $\psi \in S$.
		\item[(iii)] A ring $R$ is said to be abelian if every idempotent element of $R$ is central. Further, a module $M$ is called abelian if its endomorphism ring $S$ is abelian. In other words, $M$ is an abelian module if $\psi e(m)=e\psi (m)$ for every $m\in M$ where $\psi \in S$ and $e^2=e\in S$.
		\item[(iv)] $M$ is said to be symmetric, if $\phi \psi (m)=0$  implies $\psi \phi (m)=0$ for every $\phi, \psi \in S$ and $m\in M$. 
		\item[(v)] $M$ is known as semicommutative \cite{Nazim}, if $\psi (m)=0$ implies $\psi S(m)=0$ for every $\psi \in S$ and $m\in M$.
	\end{enumerate}
\end{definition} 
Reduced modules, rigid modules, symmetric modules and semicommutative modules are abelian, for details see \cite{ORM}.
\begin{lemma}\label{RL1.1}
	(Theorem 2.25, \cite{ORM}), The following statements are equivalent for a Rickart module $M$:
	\begin{enumerate}
		\item[(i)] $M$ is an abelian module;
		\item[(ii)] $M$ is a reduced module;
		\item[(iii)] $M$ is a rigid module;
		\item[(iv)] $M$ is a semicommutative module;
		\item[(v)] $M$ is a symmetric module.
	\end{enumerate}
\end{lemma}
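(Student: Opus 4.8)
The plan is to prove the five properties equivalent by exploiting the fact, recorded just before the statement, that each of reducedness, rigidity, symmetry and semicommutativity already forces the module to be abelian; thus (ii)--(v) all imply (i) for free, and the whole burden is to climb back from the (weakest) abelian condition to the (strongest) reduced condition, which is exactly where the Rickart hypothesis must be spent. Concretely, I would establish the cycle (i) $\Rightarrow$ (ii) $\Rightarrow$ (iii) $\Rightarrow$ (i), and then attach (iv) and (v) to it via (ii) $\Rightarrow$ (iv) $\Rightarrow$ (i) and (ii) $\Rightarrow$ (v) $\Rightarrow$ (i). The three implications issuing from (ii) are Rickart-free, and the implications into (i) are the ones quoted from \cite{ORM}; only the single arrow (i) $\Rightarrow$ (ii) needs the Rickart property.

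For the Rickart-free implications out of reduced, I would work from the equivalent formulation $\phi^2(m)=0 \Rightarrow \phi S(m)=0$. Rigidity is immediate: $\phi(m)$ lies in $\phi S(m)$ (take the identity of $S$), so $\phi^2(m)=0$ gives $\phi(m)=0$. Semicommutativity is equally short: if $\psi(m)=0$ then $\psi^2(m)=0$, whence $\psi S(m)=0$. For symmetry, suppose $\phi\psi(m)=0$; semicommutativity applied to the endomorphism $\phi\psi$ and the element $m$ yields $\phi\psi\phi(m)=0$, so $(\psi\phi)^2(m)=\psi\bigl(\phi\psi\phi(m)\bigr)=0$, and rigidity applied to $\psi\phi$ gives $\psi\phi(m)=0$. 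This disposes of (ii) $\Rightarrow$ (iii), (ii) $\Rightarrow$ (iv) and (ii) $\Rightarrow$ (v).

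The crux is (i) $\Rightarrow$ (ii): assuming $M$ abelian and Rickart, I would show $M$ reduced. Given $\phi^2(m)=0$, use the Rickart property to write $Ker(\phi)=eM$ for an idempotent $e=e^2\in S$, which is central because $M$ is abelian; note $\phi e=0$ and that $m\in Ker(\phi)$ iff $em=m$. From $\phi^2(m)=0$ we get $\phi(m)\in Ker(\phi)=eM$, so $\phi(m)=e\phi(m)=\phi e(m)=0$ by centrality, placing $m$ in $Ker(\phi)=eM$, i.e. $em=m$. Then for every $g\in S$, centrality gives $g(m)=g(em)=(ge)(m)=(eg)(m)=e\bigl(g(m)\bigr)\in eM=Ker(\phi)$, so $\phi g(m)=0$; hence $\phi S(m)=0$ and $M$ is reduced. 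Combining this with the implications of the previous paragraph and the four arrows into (i) quoted from \cite{ORM} closes all the cycles and yields the equivalence.

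I expect the main obstacle to be isolating precisely where the Rickart hypothesis is indispensable and keeping it confined to the single step (i) $\Rightarrow$ (ii): the centrality of the kernel projection $e$ is what converts the membership $g(m)\in eM$ into the vanishing $\phi g(m)=0$, and without the direct-summand kernel supplied by Rickartness one cannot produce such an $e$ at all. The remaining work is bookkeeping --- verifying that it is the equivalent form of reducedness being invoked and checking that the implications out of (ii) genuinely require no further hypotheses --- but no essential difficulty should arise there.
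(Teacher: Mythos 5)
Your proposal is correct, but there is nothing in the paper to compare it against: the paper does not prove this lemma at all, it simply imports it as Theorem 2.25 of \cite{ORM}. Judged on its own merits, your argument is sound. The implications out of (ii) are verified correctly from the equivalent form of reducedness ($\phi^2(m)=0 \Rightarrow \phi S(m)=0$), and your derivation of symmetry is legitimate since it only invokes the rigidity and semicommutativity already deduced from (ii). The arrows (ii), (iii), (iv), (v) $\Rightarrow$ (i) are exactly the facts recorded in the paper immediately before the lemma (reduced, rigid, symmetric and semicommutative modules are abelian), so quoting them is consistent with the paper's own conventions. The crux, (i) $\Rightarrow$ (ii), is also correct: writing $Ker(\phi)=eM$ with $e$ central, the chain $\phi(m)=e\phi(m)=\phi e(m)=0$ followed by $g(m)=eg(m)\in Ker(\phi)$ for all $g\in S$ does yield $\phi S(m)=0$, and this is indeed the only place the Rickart hypothesis is spent. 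Two small remarks: first, your proof inherits a dependence on the unproved implications into abelianness, though these are one-line idempotent computations (e.g.\ for rigid: $\bigl(ef(1-e)\bigr)^2=0$ forces $ef(1-e)=0$, and symmetrically $fe=efe$, so $ef=fe$) and could be included to make the argument fully self-contained; second, your structure --- a single Rickart-dependent arrow (i) $\Rightarrow$ (ii) plus Rickart-free arrows closing the cycles --- is arguably more informative than the bare citation, since it isolates precisely where Rickartness matters.
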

\begin{definition}\label{Baer}
	Let $M$ be an $R$-module and $S=End_R(M)$. Then
	\begin{enumerate}
		\item[(i)] $M$ is said to be Baer (quasi-Baer) module \cite{RR}, if for every submodule (fully invariant submodule) $N$ of $M$, $l_S(N)$ is a direct summand of $S$. Further, $M$ is called principally quasi-Baer module \cite{GL}, if for every cyclic submodule $P$ of $M$, $l_S(P)$ is a direct summand of $S$.
		\item[(ii)]  A module $M$ is called Rickart if for every endomorphism $\phi \in S$, $Ker(\phi)$ is a direct summand of $M$.
		\item[(iii)] A module $M$ is called retractable if $Hom(M,N)\neq 0$, for all $0\neq N\leq M$. Equivalently, $M$ is retractable module if there exists $0\neq \psi \in S=End_R(M)$ with $Im(\psi)\subseteq N$ for every $N\leq M$.
	\end{enumerate}
\end{definition}
It is easy to see from  definition \ref{Baer} that the following heierarchy is true, \\Baer module $\Rightarrow$ quasi-Baer module $\Rightarrow$ principally quasi-Baer module

\section{Centrally endo-AIP modules}

\begin{definition}\label{D2.1}
	An $R$-module $M$ is called a centrally endo-AIP module, if the left annihilator of any fully invariant submodule of $M$ in  $S=End_R(M)$ is a centrally s-unital ideal of $S$. Equivalently, for every $N \trianglelefteq M$ and for each $\phi \in l_S(N)$ there exists a central element $\psi \in l_S(N)$ such that $\phi \psi =\phi=\psi\phi$. Although, a ring $R$ is said to be a right centrally AIP if $R_R$ is a centrally endo-AIP $R$-module.
\end{definition}
The following proposition provides a rich source of examples of centrally endo-AIP modules.
\begin{proposition}\label{CEE1.1}
	Let $M$ be an $R$-module and $S=End_R(M)$. Consider the following statements:
	\begin{enumerate}
		\item[(i)] $M$ is an abelian Rickart module;
		\item[(ii)] $M$ is a centrally endo-AIP module;
		\item[(iii)] $M$ is an endo-AIP module.
	\end{enumerate}
	Then, $(i)\Rightarrow (ii)\Rightarrow (iii)$ while converse of these implications need not be true.
\end{proposition}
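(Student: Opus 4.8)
The plan is to establish the two forward implications directly from the definitions, and then to block the two reverse implications by small ring examples, using that for $M=R_R$ one has $S=End_R(R_R)\cong R$, that the fully invariant submodules of $R_R$ are exactly the two-sided ideals of $R$, and that $l_S(N)$ is the ordinary left annihilator; thus $R_R$ is (centrally) endo-AIP exactly when $R$ is (centrally) AIP.

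For $(i)\Rightarrow(ii)$, fix a fully invariant submodule $N\trianglelefteq M$ and an arbitrary $\phi\in l_S(N)$. Since $\phi(N)=0$ we have $N\subseteq Ker(\phi)$, and as $M$ is Rickart there is an idempotent $e=e^2\in S$ with $Ker(\phi)=eM$. Because $M$ is abelian, $e$, and hence $1-e$, is central in $S$. Two short computations then finish the argument: first, $N\subseteq eM$ gives $(1-e)(N)=0$, so $\psi:=1-e\in l_S(N)$; second, $eM=Ker(\phi)$ gives $\phi e=0$, whence $\phi\psi=\phi(1-e)=\phi$, and centrality of $\psi$ yields $\psi\phi=\phi\psi=\phi$. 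Thus each $\phi\in l_S(N)$ has a central witness $\psi\in l_S(N)$ with $\phi\psi=\phi=\psi\phi$, i.e. $l_S(N)$ is centrally s-unital, so $M$ is centrally endo-AIP. The implication $(ii)\Rightarrow(iii)$ is then immediate, since a central s-unital witness is in particular an s-unital witness, so every centrally s-unital ideal is right s-unital.

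For $(ii)\not\Rightarrow(i)$ I would take $R$ to be the full ring of $2\times 2$ matrices over a field $F$. Being simple Artinian it is (quasi-)Baer, so $R_R$ is Rickart, and its only ideals $0$ and $R$ have left annihilators $R$ and $0$, each trivially centrally s-unital (the identity is central); hence $R_R$ is centrally endo-AIP. However $R$ is not abelian, as the matrix unit $e_{11}$ is a non-central idempotent, so $R_R$ is not an abelian module, and the reverse implication fails. For $(iii)\not\Rightarrow(ii)$ I would take $R=T_2(F)$, which is quasi-Baer and therefore endo-AIP. Here I would compute the left annihilator of the ideal $N$ of strictly upper-triangular matrices and find that it equals the ideal $I$ of all matrices with zero first column; this $I$ is right s-unital with witness $e_{22}$, but it is not centrally s-unital, because the center of $T_2(F)$ is the set of scalar matrices, which meets $I$ only in $0$. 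Hence $R_R$ is endo-AIP but not centrally endo-AIP.

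The forward implications are routine; the real content lies in the counterexamples, and the principal obstacle is to confirm that the candidate annihilator ideal $I$ in $T_2(F)$ is genuinely right s-unital while admitting no central s-unital witness. This reduces to pinning down the center of $T_2(F)$ as the scalar matrices and observing its trivial intersection with $I$, which is precisely the point where the ``central'' refinement bites.
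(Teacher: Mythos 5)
Your proof is correct, and while your two forward implications coincide with the paper's argument (the paper likewise produces the central witness $1-e$ from $N\subseteq Ker(\phi)=eM$ using abelianness, and notes that a central s-unital witness is in particular an s-unital one), your counterexamples take a genuinely different and more self-contained route. The paper refutes $(ii)\Rightarrow(i)$ by importing from the literature a local-prime-ring construction that is centrally AIP yet neither abelian nor Rickart, refutes $(iii)\Rightarrow(ii)$ by citing a semiprime left PP-ring of Birkenmeier--Kim--Park together with the cited fact that it is not centrally AIP, and also treats $(iii)\not\Rightarrow(i)$ separately, which is logically redundant once $(iii)\not\Rightarrow(ii)$ is known, since $(i)\Rightarrow(ii)$. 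You replace these citations by two short, fully checkable computations: $M_2(F)$ is simple, so the only annihilators to test are $R$ and $0$ (both trivially centrally s-unital), while $e_{11}$ witnesses failure of abelianness; and in $T_2(F)$ one has $l(Fe_{12})=Fe_{12}\oplus Fe_{22}$, which is right s-unital via the witness $e_{22}$ but meets the center $F\cdot 1$ only in $0$, hence is not centrally s-unital, while quasi-Baerness of $T_2(F)$ (or the direct check that every ideal of $T_2(F)$ has left annihilator $0$, $R$, or $Fe_{12}\oplus Fe_{22}$) gives endo-AIP. The only thing you lose is that your $(ii)\not\Rightarrow(i)$ example is still Rickart, so unlike the paper's example it does not also show that Rickartness can fail; what you gain is independence from the cited literature and complete verifiability.

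One point worth flagging: your $T_2(F)$ computation applies verbatim to $T_n(\mathbb{F})$ over any domain $\mathbb{F}$, and it therefore contradicts the paper's own Example 2.7, which asserts that such rings are right centrally AIP. Under the paper's Definition 2.1 your computation is the correct one, so the discrepancy lies in the paper's use of that citation, not in your argument.
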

\begin{proof}
	$(i)\Rightarrow (ii)$  Let $N$ be a fully invariant submodule of $M$ and $f \in l_S(N)$ be arbitrary. Then, $f (N)=0$ which implies $N\subseteq Ker(f)$. Since $M$ is a Rickart module, $Ker(f)$ is a direct summand of $M$. So for some $e^2=e\in S$, $N\subseteq Ker(f)=eM$. Thus, $(1-e)N=0$ and $f e=0$. Therefore, $(1-e)\in l_S(N)$ and $f(1-e)=f$. By hypothesis $M$ is an abelian module, so every idempotent of $S$ is central. Therefore, $(1-e)$ is a central idempotent element of $l_S(N)$ such that $f(1-e)=f$. Hence, $M$ is a centrally endo-AIP module.\\
	$(ii)\Rightarrow (iii)$ Let $N$ be a fully invariant submodule of $M$ and $\psi \in l_S(N)$, then $\psi N=0$ which implies that $N\subseteq r_M(\psi)$. Since $M$ is a centrally endo-AIP module, so there exists a central element $\phi \in l_S(N)$ such that $\psi \phi = \psi$.  Therefore, $l_S(N)$ is a right s-unital ideal of $S$. Hence, $M$ is an endo-AIP module.\\
	$(ii)\nRightarrow (i)$ Let $P$ be a local prime ring which is not a domain and $J$ be the jacobson radical of $P$. Let $R=\{(x, \bar{y})\: | \: x\in J, \; \bar{y} \in \bigoplus _{n=1}^{\infty} R_n \}$, where $R_n=P/J$ for each $n$, $\bar{y}=(\bar{y}_n)_{n=1}^{\infty}$ and $\bar{y}_n=y_n +J\in R_n$. It is clear from (Example 2.8, \cite{AAK}), the ring $R$ is centrally AIP-ring which is neither abelian ring nor Rickart ring. Therefore, $R_R$ is a centrally endo-AIP $R$-module while $R_R$ is neither abelian $R$-module nor Rickart $R$-module.\\
	$(iii)\nRightarrow (ii)$ Let $R=\begin{pmatrix}
		\Pi _{i=1}^{\infty} \mathbb{F}_i &	\bigoplus _{i=1}^{\infty} \mathbb{F}_i \\ 	\bigoplus _{i=1}^{\infty} \mathbb{F}_i & \langle 	\bigoplus _{i=1}^{\infty} \mathbb{F}_i , 1 \rangle
	\end{pmatrix}$ and $M=R$, where $\mathbb{F}$ is any field and $\mathbb{F}_i = \mathbb{F}$ for $i=1,2,3,...$.	It is clear from (Example 1.6, \cite{Birken}), that the ring $R$ is semiprime left $PP$-ring, so it is semiprime left $AIP$-ring. Thus, $M$ is an endo-AIP module. Now from (Example 2.11, \cite{AAK}), $R$ is not a centrally AIP-ring. Therefore, $M$ is not a centrally endo-AIP $R$-module.\\
	$(iii)\nRightarrow (i)$  Let $R=T_n(F)$ be an upper triangular matrix ring over $F$, where $F$ is a domain which is not a division ring. Then, by (Example 2.6, \cite{DM}) $R_R$ is an endo-AIP $R$-module but not a Rickart $R$-module (see Example 2.9, \cite{GL}).
\end{proof}
\begin{corollary}\label{RCE1.1}
	A Rickart module $M$ is a centrally endo-AIP module, if $M$ satisfies any one of the following:
	\begin{enumerate}
		\item[(i)] $M$ is reduced.
		\item[(ii)] $M$ is rigid.
		\item[(iii)] $M$ is abelian.
		\item[(iv)] $M$ is semicommutative.
		\item[(v)] $M$ is symmetric.
	\end{enumerate}
\end{corollary}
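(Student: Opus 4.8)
The plan is to read Corollary \ref{RCE1.1} as a direct consequence of Proposition \ref{CEE1.1} combined with Lemma \ref{RL1.1}. The key observation is that Proposition \ref{CEE1.1} already establishes the implication $(i)\Rightarrow(ii)$, namely that every abelian Rickart module is centrally endo-AIP. So the entire content of the corollary is to verify that each of the five listed hypotheses, imposed on a module $M$ that is already assumed to be Rickart, forces $M$ to be abelian. Once abelianness is secured, the conclusion is immediate.

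First I would invoke Lemma \ref{RL1.1}, which asserts that for a Rickart module $M$ the five conditions reduced, rigid, semicommutative, symmetric, and abelian are all mutually equivalent. Since $M$ is assumed Rickart in the hypothesis of the corollary, each of the listed properties (i)--(v) is, by that lemma, equivalent to $M$ being abelian. Thus in every one of the five cases the module $M$ is simultaneously Rickart and abelian.

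Having reduced each case to ``$M$ is an abelian Rickart module,'' I would then apply the implication $(i)\Rightarrow(ii)$ of Proposition \ref{CEE1.1}, which says precisely that an abelian Rickart module is centrally endo-AIP. This completes the argument for all five cases at once, since the passage from the stated hypothesis to abelianness is uniform across (i)--(v).

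The main point to be careful about, rather than a genuine obstacle, is that the equivalences in Lemma \ref{RL1.1} are stated only under the blanket assumption that $M$ is Rickart; the corollary must therefore be phrased (as it is) with $M$ a Rickart module satisfying \emph{one} of the extra conditions, and the proof should make explicit that each extra condition is being fed into the Rickart hypothesis of the lemma to extract abelianness. There is no delicate computation involved; the only care needed is to cite Lemma \ref{RL1.1} and Proposition \ref{CEE1.1} in the correct order and to note that the five cases are handled identically. The proof can therefore be stated in two or three sentences.
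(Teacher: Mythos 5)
Your proposal is correct and follows exactly the paper's own argument: the paper's proof is precisely "it follows from Lemma \ref{RL1.1} and Proposition \ref{CEE1.1}," and you have simply spelled out the same two-step reduction (each condition on a Rickart module forces abelianness by Lemma \ref{RL1.1}, then apply the implication $(i)\Rightarrow(ii)$ of Proposition \ref{CEE1.1}). No differences to report.
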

\begin{proof}
	It follows from lemma \ref{RL1.1} and from Proposition \ref{CEE1.1}.
\end{proof}
According to Liu and Ouyang (Definition 3.2, \cite{QBT}), a right $R$-module $M$ is said to have insertion of factor property (IFP) if $r_M(\psi)\trianglelefteq M$ for all $\psi \in S=End_R(M)$. Equivalently, $l_S(m)$ is an ideal of $S$ for every $m\in M$
\begin{proposition}\label{IFP1.1}
	Let $M$ be a module with insertion of factor property and $S=End_R(M)$. Then, the following statements are equivalent:
	\begin{enumerate}
		\item[(i)] $M$ is a centrally endo-AIP module;
		\item[(ii)] $M$ is an endo-AIP module;
		\item[(iii)] $M$ is an endo-APP module.
	\end{enumerate} 
\end{proposition}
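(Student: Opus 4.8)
The plan is to first reduce everything to a statement about the semicommutative ring $S=End_R(M)$. Recall that the IFP hypothesis ``$r_M(\psi)\trianglelefteq M$ for all $\psi\in S$'' is exactly the assertion that $\psi(m)=0$ implies $\psi S(m)=0$, i.e. $M$ is semicommutative, and hence abelian (so every idempotent of $S$ is central). I would first record two consequences of this that will be used repeatedly: (a) for every $m\in M$ the left annihilator $l_S(m)=l_S(mR)$ is a \emph{two-sided} ideal of $S$, and more generally $l_S(N)$ is a two-sided ideal for every $N\trianglelefteq M$; and (b) if $SmR=\sum_{f\in S}f(m)R$ denotes the fully invariant submodule generated by $m$, then semicommutativity gives $l_S(mR)=l_S(SmR)$ (the inclusion $\supseteq$ is trivial, and $g(m)=0\Rightarrow gS(m)=0$ gives $\subseteq$). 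Thus under IFP the annihilator of a cyclic submodule is literally the annihilator of a principal fully invariant submodule. I would then prove the cycle $(i)\Rightarrow(ii)\Rightarrow(iii)\Rightarrow(i)$.

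The implication $(i)\Rightarrow(ii)$ is immediate from Proposition \ref{CEE1.1}, which already gives centrally endo-AIP $\Rightarrow$ endo-AIP without any hypothesis. For $(ii)\Rightarrow(iii)$ I would simply specialise: given a cyclic submodule $mR$, by (b) we have $l_S(mR)=l_S(SmR)$, and $SmR\trianglelefteq M$, so the endo-AIP hypothesis applied to the fully invariant submodule $SmR$ makes $l_S(mR)$ a (centrally) s-unital, hence right s-unital, ideal of $S$. Therefore $M$ is endo-APP. Both of these directions are routine once the reduction in the first paragraph is in place.

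The real content is $(iii)\Rightarrow(i)$, and the computational engine I would isolate is the following finite construction. Fix $N\trianglelefteq M$ and $\phi\in l_S(N)$, so that $N\subseteq r_M(\phi)=\mathrm{Ker}(\phi)$. For finitely many elements $n_1,\dots,n_k\in N$, endo-APP (via $l_S(n_iR)=l_S(n_i)$) furnishes $\psi_i\in l_S(n_i)$ with $\phi\psi_i=\phi$. The key point is that each $l_S(n_i)$ is a \emph{two-sided} ideal, so the ordered product $\psi_k\cdots\psi_1$ still annihilates every $n_i$: indeed $\psi_2\psi_1\in l_S(n_1)$ because $\psi_1(n_1)=0$, while $\psi_2\psi_1\in l_S(n_2)$ because $l_S(n_2)$ is a right ideal; and crucially $\phi(\psi_2\psi_1)=(\phi\psi_2)\psi_1=\phi\psi_1=\phi$. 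Iterating, $\psi_k\cdots\psi_1\in\bigcap_{i}l_S(n_i)=l_S\!\left(\sum_i Sn_iR\right)$ is a local unit for $\phi$ on any \emph{finitely generated} fully invariant subsubmodule of $N$. Finally, since $M$ is abelian, the witness must be promoted to a central element, which is where the abelian hypothesis (idempotents central) is spent.

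The hard part will be passing from this finite construction to an \emph{arbitrary} fully invariant $N$: since a fully invariant submodule need not be finitely generated, the product $\cdots\psi_2\psi_1$ becomes an infinite product with no evident meaning, and an intersection of s-unital (equivalently, pure) ideals $\bigcap_{n\in N}l_S(n)$ need not be s-unital. Phrased dually, for the fixed $\phi$ one needs $1\in l_S(N)+r_S(\phi)$, whereas endo-APP only delivers $1\in l_S(n)+r_S(\phi)$ for each single $n$, and $\left(\bigcap_n l_S(n)\right)+r_S(\phi)$ can be strictly smaller than $\bigcap_n\bigl(l_S(n)+r_S(\phi)\bigr)$. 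This is the main obstacle, and I would overcome it by exploiting that under IFP all the relevant annihilators are two-sided ideals of the semicommutative ring $S$: one expresses $N$ as the directed union of its finitely generated fully invariant submodules, uses the compatibility of the products $\psi_k\cdots\psi_1$ built above (those for larger index sets lie in the smaller annihilators and continue to fix $\phi$), and invokes the pure-ideal characterisation of s-unitality to pass to the limit, while abelianness simultaneously forces the resulting local unit to be central. I would expect this limiting/centralising step, rather than any of the implications themselves, to require the most care.
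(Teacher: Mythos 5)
Your implications $(i)\Rightarrow(ii)$ and $(ii)\Rightarrow(iii)$ are fine; the latter, via $l_S(mR)=l_S(SmR)$ under IFP, is in effect a correct proof of one direction of the equivalence the paper simply cites from (Proposition 4.3, \cite{DM}). The gap is exactly where you predict it: $(iii)\Rightarrow(i)$. Your finite construction $\psi_k\cdots\psi_1$ is correct as far as it goes, but the proposal never actually passes from finitely many $n_i\in N$ to an arbitrary fully invariant $N$; you only gesture at expressing $N$ as a directed union and ``invoking the pure-ideal characterisation of s-unitality to pass to the limit.'' No such limiting mechanism exists: the local units attached to larger finite subsets lie in smaller ideals but do not converge to anything, and -- as you yourself observe -- an intersection of s-unital ideals need not be s-unital. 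A second, independent flaw is the centralisation step: abelianness of $S$ (which IFP does give, by Proposition 3.4 of \cite{QBT}) only forces \emph{idempotents} to be central, whereas your witnesses $\psi_k\cdots\psi_1$ are not idempotents, so ``abelianness promotes the local unit to a central element'' is unjustified.

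The missing idea, which is how the paper's proof of $(ii)\Rightarrow(i)$ works, is to obtain an \emph{idempotent} witness, and this is available as a black box: by Proposition 2.10 of Dana--Moussavi \cite{DM} (the paper's citation to \cite{PM} at this point appears to be a typo), an endo-AIP module with IFP is a Rickart module. Then for $N\trianglelefteq M$ and $\phi\in l_S(N)$ one has $N\subseteq Ker(\phi)=eM$ for some $e^2=e\in S$, hence $(1-e)N=0$ and $\phi e=0$, i.e. $\phi(1-e)=\phi$; now $1-e$ \emph{is} an idempotent, so abelianness makes it central, and $1-e\in l_S(N)$ is the required central local unit. This single step -- trading elementwise s-unitality for a genuine direct-summand decomposition of $Ker(\phi)$ -- dissolves the limit problem and the centralisation problem simultaneously; without it, or some substitute that produces idempotent local units, your outline does not close.
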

\begin{proof}
	$(i)\Rightarrow (ii)$ It is clear from Proposition \ref{CEE1.1}.\\
	$(ii)\Rightarrow (i)$  Suppose that $M$ has IFP and $N\trianglelefteq M$.  Let $\phi \in l_S(N)$ be arbitrary. Then, $\phi (N)=0$ which implies $N \subseteq r_M(\phi)$. By assumption $M$ is an endo-AIP module with IFP so from (Proposition 2.10, \cite{PM}), $M$ is a Rickart module. Therefore, $N\subseteq r_M(\phi)=e(M)$ for some idempotent element $e^2 =e\in S$. Thus, $(1-e)N=0$ and $\phi e=0$. So, we have $(1-e)\in l_S(N)$ and $\phi (1-e)=\phi$. Further, $M$ has IFP property, so by (Proposition 3.4, \cite{QBT}) $S$ is an abelian ring. Therefore, the idempotent element $(1-e)$ is central. Hence, $M$ is a centrally endo-AIP module.\\
	$(ii)\Leftrightarrow (iii)$ It follows from (Proposition 4.3, \cite{DM}).	
\end{proof}

In Proposition \ref{IFP1.1}, the insertion of factor property (IFP) is not superfluous. We justify it by the following example.
\begin{example}\label{E2.7}
	Let $R=T_n(\mathbb{F})$ and $M=R_R$, where $\mathbb{F}$ is a domain which is not a division ring. Then, by (Theorem 3.5, \cite{AAK}) $R$ is a right centrally AIP ring. Therefore, $M$ is a centrally endo-AIP module but not a Rickart module (see Example 2.9, \cite{GL}). Thus, $M$ is an endo-AIP module. Let $T_{ij} \in T_2(\mathbb{F})$ where $T_{ij}$ with $1$ at $(i , j)$-position and $0$ elsewhere for every $i , j=1,2$. Then, $T_{11}T_{22}=0$ but $T_{11}T_{12}T_{22}=0$. So, $R$ does not have IFP.  Therefore, $M$ does not satisfy insertion of factor property.
\end{example}

\begin{proposition}\label{DS1.1}
	Direct summand of centrally endo-AIP module is a centrally endo-AIP.
\end{proposition}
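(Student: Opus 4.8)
The plan is to pass to the corner ring. Write $N=eM$ for an idempotent $e^2=e\in S$, and recall the canonical ring isomorphism $End_R(N)\cong eSe$, under which a submodule $L\leq N$ is fully invariant in $N$ exactly when $eSL\subseteq L$ (equivalently $es(\ell)\in L$ for all $s\in S$, $\ell\in L$, since $e\ell=\ell$ on $N$). So it suffices to fix a fully invariant $L\trianglelefteq N$ and show that $l_{eSe}(L)$ is a centrally s-unital ideal of $eSe$.

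First I would lift $L$ to $M$: set $\hat L=SL$, which is the fully invariant submodule of $M$ generated by $L$, since $\phi SL\subseteq SL$ for every $\phi\in S$. Using $eSL\subseteq L$ one checks the bookkeeping identities $e\hat L=L$ and $\hat L\cap N=L$. Because $M$ is a centrally endo-AIP module (Definition \ref{D2.1}) and $\hat L\trianglelefteq M$, the two-sided ideal $I:=l_S(\hat L)$ is centrally s-unital in $S$.

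The crux of the argument is the identity $l_{eSe}(L)=e\,l_S(\hat L)\,e=eIe$. The inclusion $\supseteq$ is immediate from $L\subseteq\hat L$. For $\subseteq$, take $g\in l_{eSe}(L)$ and write $g=ehe$ with $eh(L)=0$; the point is that $g(\hat L)=ehe(SL)=eh(eSL)\subseteq eh(L)=0$, where the step $e(SL)=eSL\subseteq L$ is precisely the full invariance of $L$ in $N$. Hence $g=ege\in eIe$. I expect this to be the main obstacle: everything hinges on $eSL\subseteq L$ to absorb the generated submodule $\hat L$ back into $L$ after applying $e$; without full invariance of $L$ in $N$ the containment fails.

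Finally I would transfer central s-unitality from $I$ to $eIe$. One first notes routinely that $eIe$ is a two-sided ideal of $eSe$ and that $Z(S)e\subseteq Z(eSe)$. Given $g=eie\in eIe$ (with $i\in I$, so $eie\in I$), central s-unitality of $I$ supplies a central $z\in Z(S)\cap I$ with $(eie)z=eie=z(eie)$. Put $c=eze=ze=ez$; then $c\in eIe$ and $c\in Z(eSe)$. A short computation using $eiez=eie$, $z\,eie=eie$ and $e^2=e$ yields $gc=g=cg$. Therefore $l_{eSe}(L)=eIe$ is centrally s-unital in $eSe\cong End_R(N)$, so $N$ is centrally endo-AIP.
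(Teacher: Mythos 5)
Your proof is correct and follows essentially the same route as the paper's: both lift the fully invariant submodule of $N=eM$ to the fully invariant submodule $SL\trianglelefteq M$, identify $l_{eSe}(L)$ with the compression $e\,l_S(SL)\,e$, and obtain the required central unit by compressing the central element of $l_S(SL)$ to $eze\in eSe$. If anything, your explicit observation that $Z(S)e\subseteq Z(eSe)$ makes the centrality step cleaner than the paper's, which only verifies that the compressed element commutes with elements of $l_T(K)$ rather than with all of $eSe$.
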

\begin{proof}
	Let $M$ be a centrally endo-AIP module with  $S=End_R(M)$ and $N\leq^{\oplus} M$. Then, for some idempotent $e^2=e\in S$, $N=eM$ and $T=End_R(N)=eSe$. Let $K$ be a fully invariant submodule of $N$. Clearly, $SK$ is also a fully invariant submodule of $M$. Suppose $\psi \in l_T(K)$, then there exists some $\phi \in S$ such that $\psi =e\phi e$. Now, $e\phi e (SK)=e\phi (eS(eK))=e\phi (eSe (K))=e\phi (K)=e\phi e (K)=\psi(K)=0$, implies that $e\phi e\in l_S(SK)$. As $M$ is a centrally endo-AIP module, so there is a central element $\eta \in l_S(SK)$ such that $e\phi e \eta=e\phi e$. It is easy to see that $e\eta e \in l_T(K)$ and $\psi (e\eta e)= e\phi e (e\eta e)= (e\phi e\eta)e=e\phi e =\psi$. Now, it only remains to show that $e\eta e$ is a central element of $l_T(K)$. For it, let $\zeta \in l_T(K)$ be arbitrary. Then $\zeta (K)=0$ and for some $\theta \in S$, $\zeta=e\theta e$. Now $e\eta e \zeta =e\eta e (e\theta e)=e\eta (e\theta e)=e(e\theta e)\eta=e\theta e (e\eta)=e\theta e (e\eta e)=\zeta e\eta e $. Thus, $e\eta e$ is a central element of $l_T(K)$ such that for $\psi \in l_T(K)$, $\psi e\eta e=\psi$. Therefore, $l_T(K)$ is a centrally s-unital ideal of $T$. Hence, $N$ is a centrally endo-AIP module.
\end{proof}
Submodules of a centrally endo-AIP module need not be centrally endo-AIP. The following example illustrates it.
\begin{example}
	Let $M=\mathbb{Z}_p \oplus \mathbb{Q}$ ($p$ is any prime) be a $\mathbb{Z}$-module. By (Example 2.9, \cite{ORM}) $M$ is a reduced module. Since, $M$ is also a Rickart module, from Corollary \ref{RCE1.1} $M$ is centrally endo-AIP module. While, the submodule $N=\mathbb{Z}_p \oplus \mathbb{Z}$ of $M$ is not centrally endo-AIP. In fact, the submodule $N$ of $M$ is not endo-APP (see Example 4.4, \cite{DM})
\end{example}
Now, we discuss in the following proposition when a submodule of a centrally endo-AIP module is a centrally endo-AIP.
\begin{proposition}
	Let $N$ be a fully invariant submodule of a centrally endo-AIP module $M$ with $S=End_R(M)$ and $T=End_R(N)$. If every $\psi \in T$ can be extended to $\bar{\psi} \in S$, then $N$ is a centrally endo-AIP submodule.
\end{proposition}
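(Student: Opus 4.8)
The plan is to mimic the structure of the proof of Proposition \ref{DS1.1}, transporting fully invariant submodules of $N$ upward to $M$ via the operator $S(-)$ and then pushing the resulting central witness back down to $T$ by restriction. The hypothesis says precisely that the restriction map $\rho:S\to T$, $\phi\mapsto\phi|_N$ (which is a well-defined ring homomorphism because $N\trianglelefteq M$), is surjective, and this surjectivity is what will make the descent succeed.

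First I would fix a fully invariant submodule $K\trianglelefteq N$ and form $SK$, the submodule of $M$ generated by $\{\phi(k):\phi\in S,\ k\in K\}$. Since $g(SK)=(gS)K\subseteq SK$ for every $g\in S$, the submodule $SK$ is fully invariant in $M$, and $K\subseteq SK$ because $1_S\in S$. Next, given an arbitrary $\psi\in l_T(K)$, I would extend it to $\bar{\psi}\in S$ and verify $\bar{\psi}\in l_S(SK)$: for $\phi\in S$ and $k\in K$ we have $\phi(k)\in N$ (as $N$ is fully invariant), so $\phi(k)=(\phi|_N)(k)\in TK\subseteq K$ (as $K$ is fully invariant in $N$), whence $\bar{\psi}(\phi(k))=\psi(\phi(k))=0$. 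Since $M$ is centrally endo-AIP, there is a central element $\eta\in l_S(SK)$ with $\bar{\psi}\eta=\bar{\psi}=\eta\bar{\psi}$.

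I would then take $\eta':=\eta|_N\in T$ as the candidate central s-unital witness for $\psi$. From $\eta(SK)=0$ together with $K\subseteq SK$ we get $\eta'(K)=0$, so $\eta'\in l_T(K)$. The s-unital identities reduce, on evaluating at $n\in N$, to the identities already available on $M$: using $\psi=\bar{\psi}|_N$ and $\eta(n),\psi(n)\in N$, one obtains $(\psi\eta')(n)=\bar{\psi}(\eta(n))=\bar{\psi}(n)=\psi(n)$ and $(\eta'\psi)(n)=\eta(\bar{\psi}(n))=\bar{\psi}(n)=\psi(n)$, giving $\psi\eta'=\psi=\eta'\psi$.

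The main obstacle, and the only place where the full strength of the extension hypothesis is required, is showing that $\eta'$ is central in $T$, since centrality in $S$ need not survive restriction on its own. Here I would take an arbitrary $\zeta\in T$, extend it to $\bar{\zeta}\in S$, and exploit that $\eta$ commutes with $\bar{\zeta}$ in $S$: evaluating at $n\in N$ and replacing $\zeta$ by $\bar{\zeta}$ on elements of $N$ yields $(\eta'\zeta)(n)=\eta(\bar{\zeta}(n))=(\eta\bar{\zeta})(n)=(\bar{\zeta}\eta)(n)=\bar{\zeta}(\eta(n))=(\zeta\eta')(n)$, so $\eta'\zeta=\zeta\eta'$. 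Hence $l_T(K)$ is a centrally s-unital ideal of $T$ and $N$ is centrally endo-AIP. The surjectivity of $\rho$ is indispensable exactly at this last step, because without a lift $\bar{\zeta}\in S$ of each $\zeta\in T$ there would be no ambient ring in which to invoke the commutation $\eta\bar{\zeta}=\bar{\zeta}\eta$.
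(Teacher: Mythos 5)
Your proof is correct, and its skeleton is the same as the paper's: extend $\psi\in l_T(K)$ to $\bar{\psi}\in S$, observe that $\bar{\psi}$ annihilates a suitable fully invariant submodule of $M$, invoke the centrally endo-AIP property of $M$ there, and restrict the resulting central witness $\eta$ back to $N$. Two differences are worth recording. First, where the paper simply invokes transitivity of full invariance ($K\trianglelefteq N$ and $N\trianglelefteq M$ imply $K\trianglelefteq M$) and works with $l_S(K)$ directly, you route through $SK$; but your own verification that $\phi(k)=(\phi|_N)(k)\in K$ for all $\phi\in S$ and $k\in K$ shows $SK\subseteq K$, hence $SK=K$, so this detour is purely cosmetic and you are applying the hypothesis to the same submodule. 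Second, and more substantively, you prove that $\eta|_N$ is central in $T$ by lifting an arbitrary $\zeta\in T$ to $\bar{\zeta}\in S$ and restricting the relation $\eta\bar{\zeta}=\bar{\zeta}\eta$ to $N$; the paper's proof omits this step entirely, concluding that $l_T(K)$ is a centrally s-unital ideal after checking only that $\eta|_N\in l_T(K)$ and $\psi\,(\eta|_N)=\psi$. Since centrality in $S$ need not descend to centrality in $T$ without surjectivity of the restriction map, your identification of this as the place where the extension hypothesis is genuinely needed (a second time) is exactly right, and on this point your write-up is more complete than the paper's own argument.
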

\begin{proof}
	Let $X$ be a fully invariant submodule of $N$. As $X\trianglelefteq N$ and $N\trianglelefteq M$ implies $X\trianglelefteq M$. Suppose that $\phi \in l_T(X)$, then $\bar{\phi}(X)=\phi (X)=0$ which implies that $\bar{\phi}\in l_S(X)$. Since $M$ is a centrally endo-AIP module, there is a central element $\eta \in l_S(X)$ such that $\bar{\phi} \eta =\bar{\phi}$. Therefore, $\phi \eta |_N = \phi$ and $\eta |_N (X)=0$, which gives $\eta |_N \in l_T(X) $ as $N\trianglelefteq M$. Thus, $l_T(X)$ is a centrally s-unital ideal of $T$. Hence, $N$ is a centrally endo-AIP module.
\end{proof}

\begin{proposition}\label{P2.8}
	If every finitely generated $\mathbb{Z}$-module $M$ is centrally endo-AIP module then, $M$ is a torsion-free or semisimple module.
\end{proposition}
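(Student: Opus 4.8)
The plan is to invoke the fundamental theorem of finitely generated abelian groups to write $M\cong \mathbb{Z}^{r}\oplus\bigoplus_{j=1}^{k}\mathbb{Z}/p_{j}^{a_{j}}$ with each $p_{j}$ prime and $a_{j}\geq 1$, and then argue by contraposition: assuming $M$ is neither torsion-free nor semisimple, I would exhibit a direct summand of $M$ that fails to be centrally endo-AIP. Since Proposition \ref{DS1.1} guarantees that the class of centrally endo-AIP modules is closed under direct summands, producing such a summand forces $M$ itself not to be centrally endo-AIP, which is exactly the contrapositive of the claim. The organising observation is that a finitely generated $\mathbb{Z}$-module which is not torsion-free has $k\geq 1$, while failing to be semisimple means that either some $a_{j}\geq 2$ or the free rank $r\geq 1$; these two alternatives are exhaustive and yield the two critical summands treated below.

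In the first alternative, $\mathbb{Z}/p^{a}$ with $a\geq 2$ is a direct summand of $M$. Here $S=End_{\mathbb{Z}}(\mathbb{Z}/p^{a})\cong \mathbb{Z}/p^{a}$ is commutative, so every submodule of $\mathbb{Z}/p^{a}$ is fully invariant; I would take the simple submodule $N=p^{a-1}\mathbb{Z}/p^{a}$ and compute $l_{S}(N)=(p)$, the unique maximal ideal. Choosing $\phi=p\in(p)$, any central s-unital witness $\psi\in(p)$ would have to satisfy $p\psi=p$ in $\mathbb{Z}/p^{a}$, i.e. $\psi\equiv 1\pmod{p^{a-1}}$, which is incompatible with $\psi\in(p)$ once $a\geq 2$. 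Hence $(p)$ is not centrally s-unital and $\mathbb{Z}/p^{a}$ is not centrally endo-AIP. In the second alternative ($r\geq 1$ with all $a_{j}=1$), $\mathbb{Z}\oplus\mathbb{Z}/p$ is a direct summand of $M$. This module was already observed not to be centrally endo-AIP in the example preceding this proposition; alternatively, writing $S=End_{\mathbb{Z}}(\mathbb{Z}/p\oplus\mathbb{Z})$ as the triangular ring $\left(\begin{smallmatrix}\mathbb{Z}/p & \mathbb{Z}/p\\ 0 & \mathbb{Z}\end{smallmatrix}\right)$, I would take the fully invariant torsion submodule $T=\mathbb{Z}/p\oplus 0$, compute $l_{S}(T)=\left(\begin{smallmatrix}0 & \mathbb{Z}/p\\ 0 & \mathbb{Z}\end{smallmatrix}\right)$, and check that the central elements of $S$ lying in this ideal are confined to $\left(\begin{smallmatrix}0 & 0\\ 0 & p\mathbb{Z}\end{smallmatrix}\right)$, so that $\left(\begin{smallmatrix}0 & 1\\ 0 & 0\end{smallmatrix}\right)\in l_{S}(T)$ admits no central s-unital witness.

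Combining the two alternatives with the summand-closure of Proposition \ref{DS1.1} completes the contrapositive, so a centrally endo-AIP finitely generated $\mathbb{Z}$-module must be torsion-free or semisimple; the argument in fact pinpoints the dichotomy, since free and semisimple finitely generated $\mathbb{Z}$-modules are readily seen to be centrally endo-AIP. I expect the main obstacle to be the second computation: one must correctly identify the center of the noncommutative endomorphism ring $\left(\begin{smallmatrix}\mathbb{Z}/p & \mathbb{Z}/p\\ 0 & \mathbb{Z}\end{smallmatrix}\right)$ and verify that it meets the annihilator ideal $l_{S}(T)$ too sparsely to furnish s-unital witnesses, whereas the first (commutative) case is routine by comparison.
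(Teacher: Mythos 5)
Your proof is correct --- I checked both annihilator computations and the case split is exhaustive --- but it takes a genuinely different route from the paper. The paper's proof is two sentences of citation: it asserts that a finitely generated centrally endo-AIP $\mathbb{Z}$-module is endo-APP and then quotes Proposition 4.8 of \cite{DM}, which states that a finitely generated endo-APP $\mathbb{Z}$-module is semisimple or torsion-free. Your argument is instead self-contained: structure theorem, contraposition, summand closure (Proposition \ref{DS1.1}), and two explicit computations, namely that in $End_{\mathbb{Z}}(\mathbb{Z}/p^{a})\cong\mathbb{Z}/p^{a}$ the ideal $l_S(p^{a-1}\mathbb{Z}/p^{a})=(p)$ admits no central s-unital witness for $\phi=p$ when $a\geq 2$, and that in the triangular ring $End_{\mathbb{Z}}(\mathbb{Z}/p\oplus\mathbb{Z})$ the central elements inside the annihilator of the torsion summand all have zero upper-left and upper-right entries and lower-right entry in $p\mathbb{Z}$, so the off-diagonal unit $\phi$ satisfies $\phi\psi=0\neq\phi$ for every such $\psi$. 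What each approach buys: the paper's proof is shorter but hinges on the step ``centrally endo-AIP $\Rightarrow$ endo-APP'', which the paper never establishes (Proposition \ref{CEE1.1} only yields endo-AIP, and a cyclic submodule need not be fully invariant; the equivalence in Proposition \ref{IFP1.1} requires IFP, which these modules lack), plus an external result of \cite{DM}; your proof is longer but elementary, verifiable entirely within the paper, and your triangular-ring computation independently establishes Example \ref{E2.16}, which the paper justifies by appealing to this very proposition, so your route also removes a potential circularity in the paper's exposition. One caution: your fallback reference to ``the example preceding this proposition'' (the one exhibiting $\mathbb{Z}_{p}\oplus\mathbb{Z}$ as not centrally endo-AIP) should not be used in place of your direct computation, since that example is itself justified in the paper only via ``not endo-APP'', i.e.\ via the same unproved implication; keep the matrix calculation as the operative argument for the second case.
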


\begin{proof}
	Let $M$ be a finitely generated centrally endo-AIP $\mathbb{Z}$-module then, $M$ is an endo-APP $\mathbb{Z}$-module. So from (Proposition 4.8, \cite{DM}), $M$ is a semisimple or a torsionfree module. 
\end{proof}

The following example shows that the direct sum of centrally endo-AIP modules need not be a centrally endo-AIP.

\begin{example}\label{E2.16}
	The $\mathbb{Z}$-modules $\mathbb{Z}$ and $\mathbb{Z}_p$ (where $p$ is prime) both are centrally endo-AIP modules, while the direct sum $M=\mathbb{Z}\oplus \mathbb{Z} _p$ is not a centrally endo-AIP $\mathbb{Z}$-module. In fact, $M$ is neither semisimple nor torsionfree so by proposition \ref{P2.8}, $M$ is not a centrally endo-AIP module.
\end{example}
In the following proposition we discuss, when the direct sum of centrally endo-AIP modules is centrally endo-AIP.
\begin{proposition}\label{P2.17}
	Let $M=M_1\oplus M_2$, where $M_1$ and $M_2$ are centrally endo-AIP modules. If every $\phi\in Hom(M_i, M_j)$ (where $i\neq j\in \{1,2\}$) is a monomorphism, then $M$ is a centrally endo-AIP module.
\end{proposition}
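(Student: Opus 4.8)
The plan is to realise $S=End_R(M)$ as the formal $2\times 2$ matrix ring whose $(i,j)$ block is $Hom(M_j,M_i)$, writing each $\phi\in S$ as $\phi=(\phi_{ij})$ with $\phi_{ij}=e_i\phi\iota_j\in Hom(M_j,M_i)$, where $e_i,\iota_i$ are the canonical projections and injections. In particular $\phi_{11}\in S_1:=End_R(M_1)$, $\phi_{22}\in S_2:=End_R(M_2)$, $\phi_{12}\in Hom(M_2,M_1)$ and $\phi_{21}\in Hom(M_1,M_2)$. First I would record the standard splitting of a fully invariant submodule over a direct sum: since $e_i(N)\subseteq N\cap M_i$, one obtains $N=N_1\oplus N_2$ with $N_i:=N\cap M_i\trianglelefteq M_i$.

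Next I would identify the annihilator. From $N=N_1\oplus N_2$ one checks at once that $\phi\in l_S(N)$ if and only if $\phi_{11}\in l_{S_1}(N_1)$, $\phi_{22}\in l_{S_2}(N_2)$, $\phi_{21}(N_1)=0$ and $\phi_{12}(N_2)=0$. This is the first place the hypothesis is used: if $N_2\neq 0$, then a map $\phi_{12}\in Hom(M_2,M_1)$ with $\phi_{12}(N_2)=0$ is either $0$ or a monomorphism, and a monomorphism forces $N_2\subseteq Ker(\phi_{12})=0$, a contradiction; hence $\phi_{12}=0$, and symmetrically $\phi_{21}=0$ whenever $N_1\neq 0$. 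Thus in the principal configuration $N_1,N_2\neq 0$ every element of $l_S(N)$ is block-diagonal, with diagonal blocks ranging over $l_{S_1}(N_1)$ and $l_{S_2}(N_2)$.

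Now fix $\phi=(\phi_{ij})\in l_S(N)$. Because $M_1$ and $M_2$ are centrally endo-AIP, I can choose a central element $\eta_1\in l_{S_1}(N_1)$ of $S_1$ and a central element $\eta_2\in l_{S_2}(N_2)$ of $S_2$ with $\phi_{ii}\eta_i=\phi_{ii}=\eta_i\phi_{ii}$ for $i=1,2$. Setting $\eta:=diag(\eta_1,\eta_2)$, it is immediate that $\eta\in l_S(N)$ and, since the off-diagonal entries of $\phi$ vanish, that $\phi\eta=\phi=\eta\phi$. The entire content of the proof is then to verify that $\eta$ is central in the \emph{whole} ring $S$; beyond $\eta_i\in Z(S_i)$ this amounts to the two identities $\eta_1 a=a\eta_2$ for all $a\in Hom(M_2,M_1)$ and $\eta_2 b=b\eta_1$ for all $b\in Hom(M_1,M_2)$.

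This last step is where the monomorphism hypothesis carries the argument, and it is the step I expect to be the main obstacle. Given $a\in Hom(M_2,M_1)$, applying full invariance of $N$ to the endomorphism $\iota_1 a e_2$ shows $a(N_2)\subseteq N_1$; hence $c:=\eta_1 a-a\eta_2$ kills $N_2$, because $\eta_1(a(N_2))\subseteq\eta_1(N_1)=0$ and $a(\eta_2(N_2))=0$. If $c\neq 0$ then $c\in Hom(M_2,M_1)$ is a monomorphism, forcing $N_2\subseteq Ker(c)=0$, a contradiction; therefore $c=0$, i.e. $\eta_1 a=a\eta_2$, and the symmetric computation on the other summand gives $\eta_2 b=b\eta_1$. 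Consequently $\eta$ is central, $l_S(N)$ is centrally s-unital, and $M$ is centrally endo-AIP. The delicate point, and the reason this centrality step is the crux, is the degenerate configuration in which one of $N_1,N_2$ vanishes: there the cancellation above is unavailable (one cannot conclude $c=0$ from $c(N_j)=0$ when $N_j=0$), so instead one must observe that full invariance already forces the corresponding Hom-group to vanish and then treat the resulting triangular situation directly. These boundary cases (essentially $N=0$, $N=M$, or $N$ supported on a single summand) are exactly where the assumption that every cross map is a monomorphism has to be invoked in its strongest form, and I would check them separately before concluding.
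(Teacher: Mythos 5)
Your main case ($N_1\neq 0$ and $N_2\neq 0$) is correct, and in that regime your argument is actually more complete than the paper's own proof: the paper performs the same block-matrix reduction, but it only proves $\alpha_{ji}\phi_i=\alpha_{ji}$ by cancelling an arbitrary $\beta_{ij}\in Hom(M_j,M_i)$, and it never verifies that its candidate unit $\mathrm{diag}(\phi_1,\phi_2)$ is central in $S$; your identities $\eta_1a=a\eta_2$ and $\eta_2b=b\eta_1$ are exactly that missing verification. The genuine gap is the boundary case you postpone, and it cannot be closed by the strategy you sketch. Under your own reading of the hypothesis (``either $0$ or a monomorphism''), the proposition is false in the configuration where $N$ is supported on one summand: take $R=\mathbb{Z}$, $M_1=\mathbb{Z}$, $M_2=\mathbb{Q}$, $N=0\oplus\mathbb{Q}$. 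Both summands are centrally endo-AIP, every nonzero map $\mathbb{Z}\to\mathbb{Q}$ is a monomorphism, and $Hom(\mathbb{Q},\mathbb{Z})=0$, so the hypothesis holds (vacuously in one direction); $N$ is fully invariant, being the maximal divisible submodule of $M$. Here $S\cong\begin{pmatrix} \mathbb{Z} & 0\\ \mathbb{Q} & \mathbb{Q} \end{pmatrix}$ (with $(i,j)$ entry $Hom(M_j,M_i)$), the only central elements of $S$ are the integer multiples of $1_M$, and $l_S(N)=\begin{pmatrix} \mathbb{Z} & 0\\ \mathbb{Q} & 0 \end{pmatrix}$; hence the only central element of $S$ lying in $l_S(N)$ is $0$, and the nonzero idempotent $\mathrm{diag}(1,0)\in l_S(N)$ admits no central unit. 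So $l_S(N)$ is not centrally s-unital and $M$ is not centrally endo-AIP: the ``triangular situation'' is not something one can treat directly; it is precisely where the statement fails under your reading.

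The only way to complete the proof (and it is what the paper's proof tacitly assumes) is to read the hypothesis as guaranteeing that each $Hom(M_i,M_j)$, $i\neq j$, actually contains a monomorphism: the paper's inference ``$\beta_{ij}(\cdots)=0$ for every $\beta_{ij}$, therefore $\cdots=0$'' is empty when $Hom(M_j,M_i)=0$. With that reading your boundary case disappears rather than needing a separate treatment: if $N_1\neq 0$ and $N_2=0$, then full invariance forces every $b\in Hom(M_1,M_2)$ to annihilate $N_1$, contradicting the existence of a monic such $b$, and symmetrically on the other side; so only $N=0$ (where $\eta=1_S$ works) and your main case can occur, and your argument then finishes the proposition. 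As written, however, your proposal leaves the decisive case both unproven and attached to a plan that provably cannot succeed, and that is a genuine gap rather than a routine verification.
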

\begin{proof}
	Assume that $N\trianglelefteq  M_1\oplus M_2$, then by (Lemma 1.10, \cite{RR}), $N=N_1\oplus N_2$, where $N_1 \trianglelefteq M_1$ and $N_2 \trianglelefteq M_2$. Now, let $S=End_R(M_1\oplus M_2)\cong \begin{pmatrix}
		T_1 && T_{12} \\ T_{21} && T_2 \end{pmatrix}$, where $T_i=End_R(M_i)$ for $i=1,2$ and $T_{ij}=Hom_R(M_j, M_i)$, for $i\neq j\in \{1,2\}$. Let $\alpha =\begin{pmatrix} \alpha _1 && \alpha_{12} \\ \alpha_{21} && \alpha_2 \end{pmatrix}$ and $\alpha \in l_S(N)$ then $\alpha(N)=0$ and for every $\beta_{ij}\in Hom_R(M_j, M_i)$, $\beta_{ij}\alpha_{ji}\in l_{T_i}(N_i)$, $i\neq j\in{1,2}$. Since $M_1$ and $M_2$ are centrally endo-AIP modules, there are some central elements $\phi _i \in l_{T_i}(N_i)$ for $i=1,2$ such that $ \beta_{ij}\alpha_{ji} \phi_i=\beta_{ij}\alpha_{ji}$, $i\neq j\in \{1,2\}$. Thus, for every $m\in M_i$, $(\beta_{ij}\alpha_{ji}\phi_i)(m)=(\beta_{ij}\alpha_{ji})(m)$, and so $\beta_{ij}((\alpha_{ji}\phi_i)(m)-(\alpha_{ji})(m))=0$. Therefore, by assumption $(\alpha_{ji}\phi_i )(m)-(\alpha_{ji})(m)=0$ for every $i\neq j \in \{1,2\}$ and $m\in M_i$, which implies that $\alpha_{ji}\phi _i= \alpha_{ji}$. Thus, for $\phi_1 \in T_1$ and $\phi_2\in T_2$, $\phi =\begin{pmatrix} \phi _1 && 0 \\ 0 && \phi _2 \end{pmatrix} \in l_S(N)$ and $\alpha \phi=\alpha $. Therefore, $l_S(N)$ is a centrally s-unital ideal of $S$. Hence, $M$ is a centrally endo-AIP module.
\end{proof}
\begin{theorem}\label{T2.18}
	Let $M=\bigoplus_{\lambda \in \Lambda} M_\lambda$, where $M_\lambda$ is a centrally endo-AIP module for every $\lambda \in \Lambda$ and $M_\lambda \cong M_\nu$ for every $\lambda, \nu \in \Lambda$. Then $M$ is a centrally endo-AIP module.
\end{theorem}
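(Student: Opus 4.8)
The plan is to reduce the statement to a matrix description of $S=End_R(M)$ over a single endomorphism ring. Fix $N$ with $N\cong M_\lambda$ for all $\lambda$ and set $T=End_R(N)$; identifying $M\cong N^{(\Lambda)}$, I view each $\varphi\in S$ as a matrix $(\varphi_{\mu\lambda})$ with entries $\varphi_{\mu\lambda}=\pi_\mu\varphi\iota_\lambda\in T$, where $\iota_\lambda$ and $\pi_\mu$ are the canonical injections and projections. First I would record that the center of $S$ consists exactly of the scalar matrices $zI$ with $z$ in the center $Z(T)$ of $T$: writing $e_\lambda=\iota_\lambda\pi_\lambda$, commutation with each $e_\lambda$ forces a central element to be diagonal, and commutation with the isomorphisms $M_\lambda\cong M_\nu$ (viewed as elements of $S$) forces all diagonal entries to coincide with a single central $z\in T$.

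Next I would show that every fully invariant submodule $K\trianglelefteq M$ is homogeneous. By (Lemma 1.10, \cite{RR}) we have $K=\bigoplus_{\lambda}K_\lambda$ with $K_\lambda=K\cap M_\lambda$, and applying $\iota_\lambda g\pi_\lambda$ for $g\in End_R(M_\lambda)$ shows $K_\lambda\trianglelefteq M_\lambda$. For fixed $\lambda,\nu$ choose an isomorphism $u\colon M_\lambda\to M_\nu$; since $\iota_\nu u\pi_\lambda\in S$ and $\iota_\lambda u^{-1}\pi_\nu\in S$, full invariance gives $u(K_\lambda)=K_\nu$. Hence there is a single fully invariant submodule $K_0\trianglelefteq N$ to which every $K_\lambda$ corresponds under the identification $M_\lambda\cong N$. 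The annihilator condition then decouples entrywise: $\varphi\in l_S(K)$ if and only if $\varphi_{\mu\lambda}(K_\lambda)=0$ for all $\mu,\lambda$, i.e. if and only if every entry $\varphi_{\mu\lambda}$ lies in the ideal $l_T(K_0)$. Because $N$ is centrally endo-AIP, $l_T(K_0)$ is a centrally s-unital ideal of $T$.

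It then remains to produce, for a fixed $\alpha\in l_S(K)$, a central element of $l_S(K)$, which by the first step must be a scalar matrix $zI$ with $z\in Z(T)\cap l_T(K_0)$ satisfying $\alpha_{\mu\lambda}z=\alpha_{\mu\lambda}$ for every entry. The basic tool is a combination identity: if $z_1,z_2$ are central elements of $l_T(K_0)$ with $az_1=a$ and $bz_2=b$, then $z_1+z_2-z_1z_2$ is again central, lies in $l_T(K_0)$ (since it is an ideal), and satisfies $a(z_1+z_2-z_1z_2)=a$ and $b(z_1+z_2-z_1z_2)=b$; iterating this, any \emph{finite} family of entries admits one common central s-unital witness $z$, and centrality of $zI$ automatically yields $\alpha\,(zI)=\alpha=(zI)\,\alpha$ at the matrix level.

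The main obstacle is that $\alpha$ may have infinitely many nonzero entries, so the finite combination must be upgraded to a single central $z\in l_T(K_0)$ that is right s-unital for \emph{all} entries of $\alpha$ simultaneously. This is exactly where the hypothesis $M_\lambda\cong M_\nu$ is essential: it collapses every constraint into the one ideal $l_T(K_0)$, so that the witnesses obtained for distinct entries live in a common centrally s-unital ideal and can be assembled coherently rather than varying ideal to ideal. I expect this uniformization step to be the crux of the proof, the center computation, the homogeneity of $K$, and the entrywise description of $l_S(K)$ being essentially formal once the matrix identification is in place.
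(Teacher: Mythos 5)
Your finite-$\Lambda$ argument is complete and correct, and it is in fact the right \emph{repair} of the paper's own proof, which uses the same matrix-ring framework but contains two false assertions: it claims that every $\phi\in l_S(N)$ has all of its rows equal, and it claims that $\mathrm{diag}(\psi_1,\dots,\psi_n)$ with distinct central entries $\psi_\lambda\in S'$ is a central element of $Mat_n(S')$. The latter is exactly what your first step rules out: the center of the matrix ring consists of the scalar matrices $zI$ with $z\in Z(T)$, so a central witness must have all diagonal entries equal. Your homogeneity step (all $K_\lambda$ identified with a single $K_0$ via full invariance under the swap maps --- the one place the hypothesis $M_\lambda\cong M_\nu$ is used) and your combination identity $z_1+z_2-z_1z_2$ are precisely what is needed to manufacture one $z\in Z(T)\cap l_T(K_0)$ unitizing all finitely many nonzero entries of $\alpha$, after which $\psi=zI$ works. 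Neither step appears in the paper, whose proof of the finite case is broken at the centrality claim and whose treatment of the infinite case is the single sentence that the proof ``can be extended to a column finite matrix ring.''

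The genuine gap is the infinite case, which you flag but do not prove --- and it cannot be proved. Saying the witnesses ``live in a common centrally s-unital ideal and can be assembled coherently'' is not an argument: the combination identity is inherently finitary, and by your own center computation a central witness for $\alpha$ must be a single scalar $z\in Z(T)\cap l_T(K_0)$ satisfying $\alpha_{\mu\lambda}z=\alpha_{\mu\lambda}$ for all (possibly infinitely many) nonzero entries at once. This fails in general. Take $F$ a field, $T$ the ring of eventually constant sequences in $\prod_{n\in\mathbb{N}}F$ (a commutative von Neumann regular ring, so $T_T$ is abelian Rickart and hence centrally endo-AIP by Proposition 2.2 of the paper), and $A\subseteq\mathbb{N}$ infinite with infinite complement $\{c_1,c_2,\dots\}$. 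For $K_0=\bigoplus_{n\in A}F\trianglelefteq T$ one computes $l_T(K_0)=\bigoplus_{n\notin A}F$, which is centrally s-unital; but in $M=T^{(\mathbb{N})}$ the fully invariant submodule $K=K_0^{(\mathbb{N})}$ is annihilated by the column-finite matrix $\alpha=\mathrm{diag}(e_{c_1},e_{c_2},\dots)$, where $e_{c_i}\in T$ is the characteristic function of $\{c_i\}$, and a central $\psi=zI\in l_S(K)$ with $\alpha\psi=\alpha$ would need $z(c_i)=1$ for every $i$ while $z$ has finite support --- impossible. So $T^{(\mathbb{N})}$ is not centrally endo-AIP: the theorem itself is false for infinite $\Lambda$, and the missing uniformization step is not an omission you could fill but the precise point at which both your proposal and the paper's proof break down. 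Your write-up is at least honest about where that difficulty sits; to make the result correct you should restrict the statement (and your proof, which then suffices) to finite index sets.
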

\begin{proof}
	First, we prove the theorem for $\Lambda=\{1,2,...,n\}$. Now, we assume ${(M_\lambda)}_R \cong X_R$ for every $\lambda \in \Lambda$. Let $N$ be a fully invariant submodule of $M$, then by (Lemma 1.10, \cite{RR}), $N=\bigoplus_{\lambda=1}^n N_\lambda$, where $N_\lambda=N\cap M_\lambda \subseteq X$ and each $N_\lambda$ is fully invariant in $X$. It is observe that, if ${S'}=End_R(X)$ then $S\cong Mat_n ({S'})$ and
	$l_S(N)=\begin{pmatrix} l_{S'}(N_1) && l_{S'}(N_2) && ...&& l_{S'}(N_n)\\ l_{S'}(N_1) && l_{S'}(N_2) &&...  && l_{S'}(N_n)\\ ... && \:... && ...&& ...\\ l_{S'}(N_1) && l_{S'}(N_2) && ...&& l_{S'}(N_n) \end{pmatrix}$. So, if $\phi \in l_S(N)$ then $\phi=\begin{pmatrix} \phi_1 && \phi_2 && ... && \phi_n \\ \phi_1 && \phi_2 && ...&& \phi_n\\ ...&&...&&...&&\\ \phi_1 && \phi_2 && ...&& \phi_n \end{pmatrix} $, where $\phi_\lambda \in l_{S'}(N_\lambda)$ for $\lambda\in \Lambda$. Since $X$ is a centrally endo-AIP module, so for each $\phi_{\lambda} \in l_{S'}(N_{\lambda})$, there are some central elements $\psi_{\lambda} \in l_{S'}(N_\lambda)$ such that $\phi_\lambda \psi_\lambda=\phi_\lambda$. Hence, $\phi = \begin{pmatrix}
		\phi_1 && \phi_2 && ...&& \phi_n\\ \phi_1 && \phi_2 &&...&& \phi_n\\ ...&&...&&...&&...\\ \phi_1 && \phi_2 &&...&&\phi_n\end{pmatrix} =\begin{pmatrix}
		\phi_1\psi_1 && \phi_2 \psi_2 &&...&& \phi_n\psi_n\\ \phi_1\psi_1 && \phi_2 \psi_2 &&...&& \phi_n\psi_n\\ ...&&...&&...&&...\\ \phi_1\psi_1 && \phi_2 \psi_2 &&...&& \phi_n\psi_n \end{pmatrix}$ which implies that $\phi=\begin{pmatrix} \phi_1 && \phi_2 && ...&& \phi_n\\ \phi_1 && \phi_2 &&...&& \phi_n\\ ...&&...&&...&&...\\ \phi_1 && \phi_2 &&...&&\phi_n \end{pmatrix}\begin{pmatrix} \psi_1 && 0 && ...&& 0\\ 0 && \psi_2 &&...&& 0\\ ...&&...&&...&&...\\ 0 && 0 &&...&& \psi_n \end{pmatrix}$. Since, for each $\lambda \in \Lambda$, $\psi_\lambda \in l_{S'}(N_\lambda)$ is a central element, therefore $\psi =\begin{pmatrix} \psi_1 && 0 && ...&& 0\\ 0 && \psi_2 &&...&& 0\\ ...&&...&&...&&...\\ 0 && 0 &&...&& \psi_n \end{pmatrix}$ is a central element of $l_{S}(N)$.
	Now $\psi (N)=0$ because $\psi_\lambda\in l_{S'}(N_\lambda)$ for every $\lambda\in \Lambda$. Therefore, $l_S(N)$ is a centrally s-unital  ideal of $S$. By assuming $\Lambda$ an infinite set, the proof can be extended to a column finite matrix ring. Hence, $M$ is a centrally endo-AIP module.
\end{proof}

\begin{theorem}\label{Free1.1}
	The following statements are equivalent:
	\begin{enumerate}
		\item[(i)] Every projective right $R$-module is centrally endo-AIP module;
		\item[(ii)] Every free $R$-module is centrally endo-AIP module;
		\item[(iii)] $R$ is a centrally AIP ring.
	\end{enumerate}
\end{theorem}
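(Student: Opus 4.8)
The plan is to establish the cycle of implications $(i)\Rightarrow(ii)\Rightarrow(iii)\Rightarrow(i)$, with the last arrow carrying essentially all of the content. Two of the three implications are definitional. For $(i)\Rightarrow(ii)$, I would simply observe that every free $R$-module is projective, so the hypothesis of $(i)$ applies at once. For $(ii)\Rightarrow(iii)$, I would note that $R_R$ is itself a free $R$-module (free of rank one); hence by $(ii)$ the module $R_R$ is centrally endo-AIP, and by Definition \ref{D2.1} this is precisely the assertion that $R$ is a right centrally AIP ring.

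The substantive direction is $(iii)\Rightarrow(i)$, which I would split into two steps: first upgrade the hypothesis from $R_R$ to an arbitrary free module, then descend to projective modules through direct summands. Concretely, assume $R$ is centrally AIP, so that $R_R$ is a centrally endo-AIP module. Any free $R$-module $F$ has the form $F=\bigoplus_{\lambda\in\Lambda} R_\lambda$ with each $R_\lambda\cong R_R$. Since all summands are mutually isomorphic copies of the centrally endo-AIP module $R_R$, Theorem \ref{T2.18} applies verbatim and yields that $F$ is centrally endo-AIP; this also disposes of assertion $(ii)$ along the way. Note that the statement of Theorem \ref{T2.18} already encompasses both the finite index set and, via the column-finite matrix ring argument, the infinite case, so no separate treatment of uncountable rank is required.

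To complete $(iii)\Rightarrow(i)$, I would use that every projective module $P$ is a direct summand of a free module $F$. By the previous step $F$ is centrally endo-AIP, and by Proposition \ref{DS1.1} a direct summand of a centrally endo-AIP module is again centrally endo-AIP; hence $P$ is centrally endo-AIP, establishing $(i)$ and closing the cycle.

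I expect the only delicate point to be the appeal to Theorem \ref{T2.18} for a free module of infinite rank: one must confirm that the isomorphism hypothesis $M_\lambda\cong M_\nu$ is genuinely met (it is, trivially, since every summand equals $R_R$) and that the column-finite matrix representation used there is the correct model of $S=\mathrm{End}_R(F)$ in the infinite case. Once Theorem \ref{T2.18} and Proposition \ref{DS1.1} are granted, no further computation is needed, and the remaining two implications are purely formal.
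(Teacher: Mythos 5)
Your proposal is correct and matches the paper's argument in all essential respects: both reduce the free case to Theorem \ref{T2.18} applied to $\bigoplus_{\lambda\in\Lambda}R_\lambda$ with $R_\lambda\cong R_R$, and both descend from free to projective modules via Proposition \ref{DS1.1}; the only difference is that you arrange the implications as a single cycle $(i)\Rightarrow(ii)\Rightarrow(iii)\Rightarrow(i)$, whereas the paper proves $(i)\Leftrightarrow(ii)$ and $(ii)\Leftrightarrow(iii)$ separately, which is purely organizational.
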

\begin{proof}
	$(i)\Rightarrow (ii)$ It is clear.\\
	$(ii)\Rightarrow (i)$ It is well known that a projective module is a direct summand of a free module. Since, from (ii) every free module is centrally endo-AIP module. Therefore, from Proposition \ref{DS1.1} every projective module is centrally endo-AIP module.\\
	$(ii)\Rightarrow (iii)$ It is clear that $R_R$ is a free right $R$-module. So, by (ii) $R_R$ is a centrally endo-AIP $R$-module. Therefore, $R$ is a centrally AIP ring.\\
	$(iii)\Rightarrow (ii)$ Let $M=R^{(\Lambda)}$ be a free $R$-module and $\Lambda$ be an arbitrary index set. Since $R$ is a centrally AIP ring, so from Theorem \ref{T2.18} $M$ is a centrally endo-AIP module.
\end{proof}
\begin{remark}
	From Theorem \ref{Free1.1} it is clear that, if $R$ is a centrally AIP ring then polynomial ring $R[x]$ and matrix ring $M_n(R)$ are centrally AIP rings, see also (Lemma 3.4, \cite{AAK}) and (Proposition 3.14, \cite{AAK}). 
\end{remark}
\section{Endomorphism rings of centrally endo-AIP modules}
In this section we study the endomorphism ring of centrally endo-AIP modules and discuss the equivalency among quasi-Baer modules, centrally endo-AIP modules and endo-AIP modules.
\begin{proposition}\label{P3.1}
	The endomorphism ring of centrally endo-AIP module is centrally AIP ring.
\end{proposition}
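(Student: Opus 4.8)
The plan is to unwind the definition of a centrally AIP ring and connect the ideals of $S$ to the fully invariant submodules of $M$. Recall that, by the convention recorded in the introduction together with Definition \ref{D2.1}, the ring $S$ is centrally AIP precisely when, for every ideal $I\trianglelefteq S$, the left annihilator $l_S(I)=\{\phi\in S:\phi I=0\}$ is a centrally s-unital ideal of $S$; that is, for each $\phi\in l_S(I)$ there is a central element $\psi\in l_S(I)$ with $\phi\psi=\phi=\psi\phi$. So I would fix an arbitrary ideal $I\trianglelefteq S$ and aim to produce such a central witness $\psi$, the whole strategy being to transport the problem from the ring $S$ to the module $M$ where the centrally endo-AIP hypothesis applies.

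The bridge I would use is the submodule $IM=\sum_{f\in I}f(M)$ of $M$. First I would check that $IM$ is fully invariant: for any $g\in S$ and $f\in I$ we have $g\bigl(f(M)\bigr)=(gf)(M)\subseteq IM$ because $gf\in I$, so $g(IM)\subseteq IM$, i.e. $IM\trianglelefteq M$. Next I would establish the annihilator identity $l_S(I)=l_S(IM)$, where the right-hand side is the module-theoretic annihilator of the submodule $IM$. Indeed, $\phi\in l_S(I)$ means $\phi f=0$ for every $f\in I$, which is equivalent to $\phi\bigl(f(M)\bigr)=0$ for every $f\in I$, i.e. $\phi(IM)=0$; this is exactly $\phi\in l_S(IM)$. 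Both inclusions follow at once from this reformulation, so the ring-theoretic left annihilator of the ideal $I$ coincides with the module-theoretic left annihilator of the fully invariant submodule $IM$.

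With these two observations in hand the conclusion is immediate: since $M$ is a centrally endo-AIP module and $IM\trianglelefteq M$, the ideal $l_S(IM)$ is centrally s-unital in $S$ by Definition \ref{D2.1}; hence so is $l_S(I)=l_S(IM)$. As $I$ was an arbitrary ideal of $S$, every left annihilator of an ideal of $S$ is centrally s-unital, which is precisely the statement that $S$ is a centrally AIP ring. I expect the only genuine point requiring care — the ``main obstacle,'' though a mild one — to be verifying the identity $l_S(I)=l_S(IM)$ cleanly, since it rests on correctly tracking the direction of composition and on the fact that annihilating each generator $f(M)$ is the same as annihilating the whole sum $IM$; full invariance of $IM$ itself, and the final invocation of the hypothesis, are then routine.
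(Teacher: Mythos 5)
Your proposal is correct and follows essentially the same route as the paper: both arguments pass from an ideal $I\trianglelefteq S$ to the fully invariant submodule $I M$ (the paper writes $T(M)$) and use the equality of annihilators $l_S(I)=l_S(IM)$ — which the paper establishes as two inline containments rather than stating it as an identity — before invoking the centrally endo-AIP hypothesis. No gaps; the explicit formulation of the annihilator identity is just a cleaner packaging of the same idea.
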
 
\begin{proof}
	Let $S=End_R(M)$ be an endomorphism ring of $M$, $T$ be an ideal of $S$ and $f \in l_S(T)$. Then $f(T(M))=0$ which implies that $f\in l_S(T(M))$. It is clear that $T(M)$ is a fully invariant submodule of $M$.  Since $M$ is a centrally endo-AIP module, there exists a central element $g\in l_S(T(M))$ such that $fg=f$. Thus, $gT(M)=0 \Rightarrow gT=0$ which implies that $g\in l_S(T)$. Hence, $S$ is a centrally AIP ring.
\end{proof}

\begin{corollary}\label{SP1.1}
	The endomorphism ring of a centrally endo-AIP module is a semiprime ring.	
\end{corollary}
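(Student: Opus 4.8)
The plan is to deduce the corollary directly from Proposition \ref{P3.1}, which already tells us that $S=End_R(M)$ is a centrally AIP ring; it therefore suffices to prove the purely ring-theoretic fact that every centrally AIP ring is semiprime. First I would recall that a ring is semiprime precisely when it contains no nonzero nilpotent ideal, and that verifying this reduces to ruling out ideals $I$ with $I^2=0$: if $N$ is a nonzero ideal with $N^k=0$ and $N^{k-1}\neq 0$ for some $k\geq 2$, then $N^{k-1}$ is a nonzero ideal whose square vanishes, so it is enough to exclude the square-zero case.

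Next, suppose $I$ is an ideal of $S$ with $I^2=0$, and let $a\in I$ be arbitrary. Since $aI\subseteq I^2=0$, we have $a\in l_S(I)$, and as $a$ was arbitrary this gives $I\subseteq l_S(I)$. Because $S$ is a centrally AIP ring, the left annihilator $l_S(I)$ is a centrally s-unital ideal of $S$; hence there is a \emph{central} element $z\in l_S(I)$ with $az=a$.

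The key step is to exploit the centrality of $z$ together with the membership $z\in l_S(I)$. Since $z$ annihilates $I$ on the left and $a\in I$, we obtain $za=0$, and because $z$ is central this yields $az=za=0$. Combined with $az=a$, this forces $a=0$. As $a\in I$ was arbitrary, $I=0$, so $S$ admits no nonzero ideal with vanishing square and is therefore semiprime.

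I do not anticipate a genuine obstacle, since the argument is short once Proposition \ref{P3.1} is available. The only point demanding care is the correct use of centrality: it is precisely the fact that $z$ is a central element of $l_S(I)$, rather than merely a one-sided s-unital element, that converts $za=0$ into $az=0$ and closes the argument. This is exactly where the word \emph{centrally} in centrally AIP, as opposed to the weaker endo-AIP notion, does the essential work, which is why the semiprimeness conclusion is a consequence of Proposition \ref{P3.1} and not available from mere endo-AIP-ness.
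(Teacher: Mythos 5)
Your proposal is correct, and its overall skeleton coincides with the paper's: both first invoke Proposition \ref{P3.1} to conclude that $S=End_R(M)$ is a centrally AIP ring, and then apply the implication ``centrally AIP $\Rightarrow$ semiprime.'' The difference is in how that implication is handled. The paper simply cites it as Proposition 2.9 of \cite{AAK} and stops; you instead prove it from scratch, and your proof is sound: the reduction of semiprimeness to excluding ideals $I$ with $I^2=0$ is the standard one (if $N^k=0$ with $N^{k-1}\neq 0$, $k\geq 2$, then $N^{k-1}$ squares to zero), the inclusion $I\subseteq l_S(I)$ follows immediately from $I^2=0$, and the punchline $a=az=za=0$ is exactly the right use of the centrally s-unital condition --- as you note, centrality of $z$ is indispensable, since a merely right s-unital annihilator (the endo-AIP setting) would give $az=a$ but no way to convert $za=0$ into $az=0$ (indeed, Example \ref{CEE1.1}$(iii)\nRightarrow(ii)$ implicitly shows the gap between the two notions). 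What your route buys is self-containedness: the corollary no longer depends on an external reference, and your argument isolates precisely where centrality is used. What the paper's route buys is brevity and consistency with its general practice of quoting \cite{AAK} for ring-theoretic facts about centrally AIP rings.
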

\begin{proof}
	Since, from (Proposition 2.9, \cite{AAK}) every centrally AIP ring is semiprime ring. Therefore, the proof follows from Proposition \ref{P3.1}.
\end{proof}
\begin{remark}
	We observe that when we take the class of finitely generated projective module $M$ over a centrally AIP ring $R$, then the endomorphism ring of $M$ is a centrally AIP. In particular, the centrally AIP property is Morita invariant (Theorem 3.5, \cite{AAK}).
\end{remark}
The following example shows that the converse of proposition \ref{P3.1} need not be true in general.
\begin{example}\label{E3.2}
	Let $M=\mathbb{Z}_{p^{\infty}}$ be a $\mathbb{Z}$-module, where $p$ is prime . It is well known that $End_R(M)\cong \mathbb{Z}_{(\bar{p})}$ (ring of p-adic integers) (see Example 3, page 216 \cite{LF}), which is a commutative domain and endo-AIP ring (see Example 3.2, \cite{DM}). Therefore, it is a centrally AIP ring. Also, $M$ is not an endo-AIP module (see Example 3.2, \cite{DM}). Hence, by Proposition \ref{CEE1.1} $M$ is not a centrally endo-AIP module.
\end{example}
Recall that a module $M$ is locally principally quasi-retractable module, if for every principal ideal $I$ of $S=End_R(M)$ such that $r_M(I)\neq 0$, then there exists a non-zero endomorphism $\psi \in S$ such that $r_M(I)=\psi (M)$ (see Definition 3.3, \cite{DM}).
\begin{proposition}\label{LPQ1.1}
	Let $M$ be a locally principally quasi-retractable module. If $S=End_R(M)$ is a left centrally AIP ring, then $M$ is a centrally endo-AIP module.
\end{proposition}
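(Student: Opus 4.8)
The plan is to verify the defining condition element by element and to import central $s$-unitality from the hypothesis that $S$ is left centrally AIP. Fix a fully invariant submodule $N\trianglelefteq M$ and an arbitrary $\phi\in l_S(N)$; the case $N=0$ is trivial since $l_S(0)=S$ is centrally $s$-unital with witness $1_S$, so assume $N\neq 0$. It suffices to produce a \emph{central} $\psi\in l_S(N)$ with $\phi\psi=\phi$, for then centrality gives $\psi\phi=\phi\psi=\phi$ as well, and $l_S(N)$ is a centrally $s$-unital ideal.

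The bridge to the ring hypothesis is the trace ideal $I_N:=\{f\in S : f(M)\subseteq N\}$. Since $N$ is fully invariant, for $f\in I_N$ and $g\in S$ we have $(gf)(M)\subseteq g(N)\subseteq N$ and $(fg)(M)\subseteq f(M)\subseteq N$, so $I_N$ is a two-sided ideal of $S$. Moreover $\phi\in l_S(I_N)$, because $(\phi f)(M)=\phi(f(M))\subseteq\phi(N)=0$ for every $f\in I_N$. As $S$ is a left centrally AIP ring, $l_S(I_N)$ is a centrally $s$-unital ideal, so there is a central $\psi\in l_S(I_N)$ with $\phi\psi=\phi$. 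Writing $I_N(M)=\sum_{f\in I_N}f(M)$ for the trace of $N$, one has $l_S(I_N)=l_S(I_N(M))$, whence $\psi$ annihilates the trace $I_N(M)\subseteq N$; the entire difficulty is to upgrade this to $\psi(N)=0$, i.e. $\psi\in l_S(N)$.

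This upgrade is where local principal quasi-retractability is meant to be used, and where I expect the main obstacle. Because $\psi$ is central, the principal ideal $S\psi S$ satisfies $r_M(S\psi S)=Ker(\psi)$, and by construction $I_N(M)\subseteq Ker(\psi)$. If $Ker(\psi)\neq 0$, the hypothesis expresses it as an image $Ker(\psi)=\rho(M)$ for some $0\neq\rho\in S$; I would then exploit this representation, together with full invariance of $N$ (so that every endomorphism maps $N$ into $N$) and the containment of the trace of $N$ in $\rho(M)$, to force $N\subseteq Ker(\psi)$ and conclude $\psi\in l_S(N)$.

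The genuinely hard point is precisely this last transfer from the trace of $N$ to $N$ itself: for an arbitrary module a fully invariant submodule can be strictly larger than its trace, so annihilating $I_N(M)$ is weaker than annihilating $N$, and no purely formal annihilator identity closes the gap. Local principal quasi-retractability is exactly the device that identifies the relevant annihilator submodule $Ker(\psi)$ with a single image $\rho(M)$, and it is this identification that must be leveraged to complete the argument; once $\psi\in l_S(N)$ is secured, $l_S(N)$ is centrally $s$-unital and $M$ is centrally endo-AIP.
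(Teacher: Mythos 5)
There is a genuine gap, and you flag it yourself: the step ``force $N\subseteq Ker(\psi)$'' is never carried out, and with your setup it cannot be, because your containments point the wrong way. The central element $\psi$ produced from the trace ideal $I_N=Hom_R(M,N)\subseteq S$ annihilates only the trace $I_N(M)\subseteq N$; to get $\psi\in l_S(N)$ you would need $\psi$ to kill a submodule \emph{containing} $N$, not one contained in it. Nothing in your construction forces that: in the extreme case $I_N(M)=0$ one has $l_S(I_N)=S$, and the centrally s-unital witness may simply be $\psi=1$, which annihilates no nonzero $N$. Your intended appeal to local principal quasi-retractability stalls at the same place: it only applies when $r_M(S\psi S)=Ker(\psi)\neq 0$, and even then, knowing $I_N(M)\subseteq Ker(\psi)=\rho(M)$ says nothing about the position of $N$ relative to $\rho(M)$, precisely because a fully invariant submodule can be strictly larger than its trace. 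The defect is structural: you invoked the centrally AIP hypothesis first, on an ideal ($I_N$) whose annihilator is too big, and then hoped retractability would repair the annihilator afterwards.

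The paper's proof applies the two hypotheses in the opposite order and to different ideals, so the needed containment comes for free. Given $0\neq N\trianglelefteq M$ and $f\in l_S(N)$, full invariance of $N$ makes $l_S(N)$ an ideal of $S$, so $SfS\subseteq l_S(N)$ and hence $N\subseteq r_M(SfS)\neq 0$. Local principal quasi-retractability, applied to the principal ideal $SfS$, yields $0\neq g\in S$ with $r_M(SfS)=g(M)\supseteq N$; since $fs\in SfS$ for every $s\in S$, we get $fSg(M)=0$, i.e.\ $f\in l_S(SgS)$. Now the left centrally AIP hypothesis, applied to the ideal $SgS$, gives a central $h\in l_S(SgS)$ with $fh=f=hf$; since $hg=0$, this $h$ annihilates all of $g(M)$, hence all of $N$. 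Thus $h$ is a central element of $l_S(N)$ with $fh=f$, which is exactly the centrally s-unital condition. The ``upgrade'' you could not perform never arises in this ordering, because the annihilator hypothesis is applied to an ideal ($SgS$) whose right annihilator in $M$ is an image $g(M)$ already known to contain $N$, rather than to an ideal whose associated submodule sits inside $N$.
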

\begin{proof}
	Let $N$ be a fully invariant submodule of $M$. Then for every $f \in l_S(N)$, $Sf S\subseteq l_S(N)$. Thus, $0\neq N\subseteq r_M(Sf S)$. Since $M$ is a locally principally quasi-retractable module and $r_M(Sf S)\neq 0$, so there exists $0\neq g \in S$ such that $N\subseteq r_M(Sf S)=g (M)$ and $f \in l_S(Sg S)$. Since $S$ is a left centrally AIP ring, there is a central element $h \in l_S(Sg S)$ such that $f h =f=h f$. Now as $N\subseteq g (M)$ so $h (N)\subseteq h (g (M))=0$. Therefore, $h \in l_S(N)$. Hence, $M$ is centrally endo-AIP module. 
\end{proof}
Recall that a right $R$-module $M$ has uniform dimension $n$ (written as $u.dim(M_R)=n$) if there is an essential submodule $N$ of $M$, which is a direct sum of $n$ uniform submodules. If no such integer exists, then $u.dim(M)=\infty$. Further, a ring $R$ has finite right (left) uniform dimesion, if $u.dim(R_R)=n$ ($u.dim(_RR)=n$) for a positive integer $n$.
\begin{proposition}\label{UD1.1}
	Let $M$ be a centrally endo-AIP module and $S=End_R(M)$. If $S$ has finite right uniform dimension, then $S$ is a quasi-Baer ring.
\end{proposition}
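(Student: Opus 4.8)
The plan is to reduce the statement to a single structural fact: in a ring of finite right uniform dimension, a centrally s-unital two-sided ideal is generated by a central idempotent. First I would collect the ingredients already in hand. By Proposition \ref{P3.1} the ring $S$ is a centrally AIP ring, and by Corollary \ref{SP1.1} it is semiprime. Fix an ideal $T\trianglelefteq S$ and set $I=l_S(T)$. Since $T$ is two-sided, so is $I$, and the centrally AIP hypothesis says $I$ is centrally s-unital: for each $f\in I$ there is a central $g\in I$ with $fg=f=gf$. The target is $I=eS$ for a central idempotent $e$, for then the left annihilator of every ideal is generated by an idempotent, so $S$ is left quasi-Baer and, by the left--right symmetry of the quasi-Baer condition, quasi-Baer.

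Next I would bring in finite uniform dimension. As a right ideal, $I$ is a submodule of $S_S$, so $u.dim(I_S)=k\le n$. Choosing uniform submodules $U_1,\dots,U_k$ with $\bigoplus_i U_i\leq^{e} I$ and a nonzero cyclic $u_iS\leq^{e} U_i$ inside each one, the finitely generated subideal $F=\sum_{i=1}^{k} u_iS=\bigoplus_i u_iS$ is essential in $I$. This is the sole role of the dimension hypothesis: it supplies a finitely generated essential subideal $F$, so that the s-unital structure can be concentrated on finitely many generators.

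Then I would build a central near-identity on $F$ by a Tominaga-type induction with central witnesses. Starting from a central $z_1\in I$ with $u_1z_1=u_1$ and passing from $e_{j-1}$ to $e_j=e_{j-1}+w-we_{j-1}$, where $w\in I$ is central and satisfies $w(u_j-e_{j-1}u_j)=u_j-e_{j-1}u_j$, one obtains a central $e\in I$ with $eu_i=u_i$ for all $i$; centrality survives each step precisely because the witnesses are central. Hence $ey=y$ for every $y\in F$.

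Finally I would upgrade $e$ to an idempotent and pin down $I$; this is the step I expect to be the main obstacle. The element $x=e^2-e$ is central, lies in $I$, and annihilates $F$. Idempotency is where semiprimeness is indispensable, via the identity $r_S(x)=r_S(x^2)$ for a central $x$ of a semiprime ring: if $x^2t=0$ then $xtSxt=x^2(tSt)=0$, so the ideal $SxtS$ has square zero and hence vanishes, giving $xt=0$. If $x\neq 0$, then $xS$ is a nonzero ideal inside $I$, so essentiality yields $0\neq w=xs\in F$; but $xw=0$ forces $x^2s=0$, whence $xs=0=w$, a contradiction. Thus $e^2=e$. Now $F\subseteq eS$, and writing $I=eS\oplus(I\cap(1-e)S)$ by modularity (legitimate since $eS\subseteq I$), essentiality of $F$ in $I$ kills the second summand, so $I=eS$ with $e$ central idempotent. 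I anticipate the delicate points to be checking that the Tominaga construction stays central and that semiprimeness genuinely forces $x=0$; the finite-uniform-dimension hypothesis, in contrast, only has to deliver the finitely generated essential subideal $F$.
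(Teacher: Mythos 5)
Your proposal is correct, but it takes a much more self-contained route than the paper. The paper's entire proof is two steps: invoke Proposition \ref{P3.1} (so $S$ is a centrally AIP ring) and then cite Theorem 5.1 of \cite{AAK}, which states precisely that a centrally AIP ring with finite right uniform dimension is quasi-Baer. You follow the same first step, but instead of citing \cite{AAK} you essentially reconstruct the proof of that cited theorem: finite uniform dimension yields a finitely generated essential right subideal $F=\bigoplus_i u_iS$ of $I=l_S(T)$; the Tominaga-type induction with central witnesses produces a central $e\in I$ acting as a left identity on $F$ (centrality is preserved at each step because sums and products of central elements are central); semiprimeness --- your Corollary \ref{SP1.1}, which the paper's proof never needs to invoke explicitly since it is absorbed into the cited theorem --- forces $e^2=e$ via the identity $r_S(x)=r_S(x^2)$ for central $x$, proved by the square-zero ideal argument; and the modular law (legitimate since $eS\subseteq I$) together with essentiality of $F$ gives $I=eS$. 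All of these steps check out. What your route buys is transparency: it shows exactly where each hypothesis enters (uniform dimension gives the finite generating set, central s-unitality gives the central local unit, semiprimeness gives idempotency), whereas the paper's route buys brevity at the cost of a black-box citation. One small simplification available to you at the end: since $S$ is semiprime, left and right annihilators of two-sided ideals coincide (if $xT=0$ then $(TxS)^2\subseteq T(xT)xS=0$, so $Tx=0$), hence $r_S(T)=l_S(T)=eS$ directly, and the appeal to the left--right symmetry of the quasi-Baer condition can be avoided altogether.
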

\begin{proof}
	Let $M$ be a centrally endo-AIP module. Then, from Proposition \ref{P3.1} $S$ is centrally AIP ring. So, by assumption $S$ is a centrally AIP ring with finite right uniform dimension. Hence, from (Theorem 5.1, \cite{AAK}) $S$ is a quasi-Baer ring.
\end{proof}
A right $R$-module $M$ is called semi-projective \cite{Vedadi}, if $T=Hom_R(M, TM)$ for any cyclic right ideal $T$ of $S=End_R(M)$.
\begin{corollary}
	Endomorphism ring of a centrally endo-AIP semi-projective retractable module with finite uniform dimension is quasi-Baer. 
\end{corollary}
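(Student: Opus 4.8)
The plan is to reduce the claim to Proposition \ref{UD1.1}. Since $M$ is centrally endo-AIP, its endomorphism ring $S=End_R(M)$ is a centrally AIP ring by Proposition \ref{P3.1}, so by Proposition \ref{UD1.1} it suffices to show that $S$ has finite \emph{right} uniform dimension. Because $M$ is assumed to have finite uniform dimension, the whole problem comes down to transferring finiteness of $u.dim(M_R)$ to finiteness of $u.dim(S_S)$, and the two hypotheses still unused, retractability and semi-projectivity, are precisely what is needed for this transfer. Concretely, I would prove the inequality $u.dim(S_S)\le u.dim(M_R)$.

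To do this, I would work with the two order-preserving maps between the right ideals of $S$ and the submodules of $M$, namely $N\mapsto Hom_R(M,N)=\{f\in S:f(M)\subseteq N\}$, a right ideal of $S$, and $I\mapsto IM=\sum_{f\in I}f(M)$, a submodule of $M$. For a cyclic right ideal $fS$ one has $(fS)M=f(M)$, which is nonzero whenever $f\neq0$ since no nonzero endomorphism annihilates $M$, and semi-projectivity gives $Hom_R(M,f(M))=fS$. Retractability is used in the opposite direction: every nonzero submodule $N$ receives a nonzero endomorphism, i.e.\ $Hom_R(M,N)\neq0$. I would then show that these two features send an independent family of nonzero cyclic right ideals of $S$ to an independent family of nonzero submodules of $M$, which immediately yields $u.dim(S_S)\le u.dim(M_R)$.

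For the independence step, suppose $f_1S,\dots,f_kS$ are independent nonzero right ideals of $S$ and assume, for contradiction, that $N:=f_k(M)\cap\bigl(f_1(M)+\cdots+f_{k-1}(M)\bigr)\neq0$. By retractability there is a nonzero $h\in Hom_R(M,N)$. Since $h(M)\subseteq f_k(M)$, semi-projectivity places $h\in f_kS$; since $h(M)\subseteq f_1(M)+\cdots+f_{k-1}(M)$, the finite-sum form of semi-projectivity, $Hom_R\bigl(M,\sum_{i<k}f_i(M)\bigr)=\sum_{i<k}f_iS$, places $h\in\sum_{i<k}f_iS$. Hence $h\in f_kS\cap\sum_{i<k}f_iS=0$, contradicting $h\neq0$, so the images $f_1(M),\dots,f_k(M)$ are independent. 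I expect the main obstacle to be exactly this last use of semi-projectivity on the finite sum $\sum_{i<k}f_i(M)$: the definition only supplies the identity $T=Hom_R(M,TM)$ for a \emph{cyclic} $T$, whereas here I need it for the finitely generated ideal $f_1S+\cdots+f_{k-1}S$, so the careful point is to verify, or cite from \cite{Vedadi}, that semi-projectivity propagates from cyclic to finitely generated right ideals. Once $u.dim(S_S)\le u.dim(M_R)<\infty$ is established, Proposition \ref{UD1.1} applies and shows that $S$ is quasi-Baer.
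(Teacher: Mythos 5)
Your overall reduction is exactly the paper's: the paper, too, observes that by Proposition \ref{UD1.1} everything comes down to showing that $S=End_R(M)$ has finite right uniform dimension, and it settles that point in one line by citing Theorem 2.6 of \cite{Vedadi}, which gives $u.dim(S_S)=u.dim(M_R)$ for any semi-projective retractable module. So your fallback option (``cite from \cite{Vedadi}'') \emph{is} the paper's proof; what you add is an attempt to prove the dimension transfer by hand, and that attempt is where the one real issue lies.

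The issue is the step you flagged yourself: you apply semi-projectivity to the finitely generated right ideal $f_1S+\cdots+f_{k-1}S$, whereas the definition only gives $T=Hom_R(M,TM)$ for \emph{cyclic} $T$, and the finitely generated version is not a formal consequence of the cyclic one — it is not something you can ``verify'' just by unwinding the definition, so as written the proof has a genuine hole. Fortunately the hole can be closed using only the cyclic case, by making the independence claim inductive: prove by induction on $k$ that $f_1(M),\dots,f_k(M)$ are independent. In the inductive step, the sum $\sum_{i<k}f_i(M)$ is, by the inductive hypothesis, an internal direct sum, so it carries canonical projections $\pi_i:\bigoplus_{j<k}f_j(M)\to f_i(M)$. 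For your nonzero $h$ with $h(M)\subseteq f_k(M)\cap\bigl(\bigoplus_{j<k}f_j(M)\bigr)$ (supplied by retractability), each composite $\pi_i h$ is an endomorphism of $M$ with image inside $f_i(M)$, hence $\pi_i h\in Hom_R(M,f_i(M))=f_iS$ by cyclic semi-projectivity, and $h=\sum_{i<k}\pi_i h\in\sum_{i<k}f_iS$; combined with $h\in Hom_R(M,f_k(M))=f_kS$ and the independence of the $f_iS$, this forces $h=0$, a contradiction. With that repair (plus the harmless observation that any independent family of nonzero right ideals may be shrunk to an independent family of nonzero cyclic ones), your inequality $u.dim(S_S)\le u.dim(M_R)$ is established self-containedly — which is more than the paper does — and the remaining pieces of your argument (retractability producing $h$, the final appeal to Proposition \ref{UD1.1}) are correct as written.
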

\begin{proof}
	Let $M$ be a semi-projective retractable module with finite uniform dimension. Then, from (Theorem 2.6, \cite{Vedadi}) $S=End_R(M)$ has finite right uniform dimension. Therefore, from Proposition \ref{UD1.1}, $S$ is a quasi-Baer ring.
\end{proof}
\begin{corollary}\label{CP1.1}
	Let $M$ be an $R$-module with endomorphism ring $S=End_R(M)$. If $M$ is a centrally endo-AIP module and $u.dim(S_S)=1$, then $S$ is a prime ring.
\end{corollary}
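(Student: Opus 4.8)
The plan is to combine the two facts the paper has already proved about $S$ with the structural rigidity imposed by $u.dim(S_S)=1$. First I would apply Proposition \ref{UD1.1}: since $u.dim(S_S)=1$ is in particular a finite right uniform dimension, $S$ is a quasi-Baer ring, so the right annihilator $r_S(I)$ of every two-sided ideal $I$ of $S$ is of the form $eS$ for some idempotent $e^2=e\in S$.

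Next I would exploit $u.dim(S_S)=1$ to show that $0$ and $1$ are the only idempotents of $S$. Indeed, $u.dim(S_S)=1$ says that $S_S$ is a uniform module, so any two nonzero right ideals meet nontrivially; a nontrivial idempotent $e$ (with $e\neq 0,1$) would give a decomposition $S=eS\oplus(1-e)S$ into two nonzero right ideals with $eS\cap(1-e)S=0$, contradicting uniformity. Hence every idempotent of $S$ is trivial.

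Then, for an arbitrary nonzero ideal $I$ of $S$, the quasi-Baer property gives $r_S(I)=eS$ with $e$ idempotent, so $r_S(I)\in\{0,S\}$ by the previous step. The possibility $r_S(I)=S$ would put $1\in r_S(I)$, forcing $I=I\cdot 1=0$ and contradicting $I\neq 0$; therefore $r_S(I)=0$. Since the right annihilator of every nonzero two-sided ideal of $S$ vanishes, $S$ is prime: if $IJ=0$ for ideals $I,J$ with $I\neq 0$, then $J\subseteq r_S(I)=0$, so $J=0$.

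I expect the only delicate points to be the bookkeeping that $r_S(I)$ is genuinely a two-sided ideal (so that the quasi-Baer hypothesis applies to it and the final prime-ring criterion reads cleanly) and the careful verification that uniform dimension one eliminates all nontrivial idempotents; both are routine once stated. Note that semiprimeness of $S$ (Corollary \ref{SP1.1}) is not logically required for this route, though it is consistent with and reinforced by the conclusion.
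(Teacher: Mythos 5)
Your proof is correct, but it takes a genuinely different route from the paper's. The paper argues by contradiction using both Corollary \ref{SP1.1} and Proposition \ref{UD1.1}: if $S$ were not prime, quasi-Baerness would give $l_S(S\phi)=Se$ for some $0\neq\phi\in S$ with $0\neq e\in\mathbb{S}_r(S)$; semiprimeness of the quasi-Baer ring $S$ then forces $e$ to be central, and the resulting decomposition $S=Se\oplus S(1-e)$ contradicts $u.dim(S_S)=1$. You instead use only Proposition \ref{UD1.1}: uniform dimension one directly kills every nontrivial idempotent (via the right-ideal decomposition $S=eS\oplus(1-e)S$), so quasi-Baerness forces $r_S(I)\in\{0,S\}$ for every ideal $I$, and $r_S(I)=S$ is absurd for $I\neq 0$ since $1\in S$; hence right annihilators of nonzero ideals vanish and $S$ is prime. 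Your route is more elementary and self-contained: it dispenses with semiprimeness (as you correctly note) and with the fact that right semicentral idempotents are central in semiprime quasi-Baer rings. It also quietly repairs a left/right mismatch in the paper's argument: the paper's decomposition $S=Se\oplus S(1-e)$ is by left ideals, while the hypothesis $u.dim(S_S)=1$ concerns the right side, which is exactly why the paper must first establish that $e$ is central; by working with the right ideals $eS$ and $(1-e)S$ from the start, you never need centrality at all. What the paper's approach buys in exchange is a reuse of the structure theory of semicentral idempotents in semiprime quasi-Baer rings that it invokes elsewhere; logically, though, your shorter argument suffices.
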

\begin{proof}
	Let $M$ be a centrally endo-AIP module and $u.dim(S_S)=1$. Then from Corollary \ref{SP1.1} and Proposition \ref{UD1.1}, $S$ is a semiprime quasi-Baer ring. If $S$ is not a prime ring, then $l_S(S\phi)=Se$ for some $0\neq \phi \in S$ and $e\in \mathbb{S}_r(S)$. Since $S$ is semiprime quasi-Baer ring, $e\in S$ is central. Thus $S=Se\oplus S(1-e)$, a contradiction as $u.dim(S_S)=1$. Hence, $S$ is a prime ring. 
\end{proof}
\begin{proposition}
	Let $M$ be an endo-AIP module and $S=End_R(M)$. If $S$ is local ring then $S$ is prime.
\end{proposition}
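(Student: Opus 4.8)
The plan is to first promote the module-theoretic endo-AIP hypothesis to a ring-theoretic statement about $S$, and then to use locality to annihilate every proper ideal annihilator. Throughout I will use that $S$ is local exactly when its non-units form the unique maximal (left) ideal, which coincides with the Jacobson radical, and that $1-j$ is a unit whenever $j$ lies in that radical.

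First I would show that $S$ is a (left) AIP ring, that is, $l_S(T)$ is a right s-unital ideal of $S$ for every ideal $T\trianglelefteq S$. This is the endo-AIP analogue of Proposition \ref{P3.1}, obtained by repeating that proof with the word ``central'' deleted: for an ideal $T$ of $S$ the submodule $T(M)$ is fully invariant in $M$, and any $f\in l_S(T)$ kills $T(M)$, so $f\in l_S(T(M))$. Since $M$ is endo-AIP, $l_S(T(M))$ is right s-unital, giving $g\in l_S(T(M))$ with $fg=f$; then $g(T(M))=0$ forces $gT=0$, so $g\in l_S(T)$, and $l_S(T)$ is right s-unital.

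Next I would prove that $l_S(K)=0$ for every nonzero ideal $K$ of $S$. Assuming $0\neq f\in l_S(K)$, the previous step supplies $g\in l_S(K)$ with $fg=f$, whence $f(1-g)=0$. Were $1-g$ a unit we would obtain $f=0$, so $1-g$ is a non-unit and, $S$ being local, lies in the Jacobson radical; consequently $g=1-(1-g)$ is a unit. But $g\in l_S(K)$ means $gK=0$, so $K=g^{-1}(gK)=0$, contradicting $K\neq 0$. Primeness is then immediate: if $A,B\trianglelefteq S$ satisfy $AB=0$ with $B\neq 0$, then $A\subseteq l_S(B)=0$, so $A=0$; hence the zero ideal is prime and $S$ is a prime ring.

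I expect the locality step to be the crux: the entire argument turns on reading $f(1-g)=0$ (with $f\neq 0$) as forcing $1-g$ into the radical and hence $g$ into the group of units. The one bookkeeping point that needs care is keeping the two kinds of left annihilator straight---$l_S(\cdot)$ of an ideal of $S$ versus $l_S(\cdot)$ of the submodule $T(M)$---and checking that an endomorphism killing $T(M)$ really kills the ideal $T$ (i.e. $gT(M)=0\Rightarrow gT=0$), which rests on the faithfulness of the action of $S$ on $M$.
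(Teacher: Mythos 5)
Your proof is correct, and its skeleton is exactly the paper's: first pass from the endo-AIP hypothesis on $M$ to the statement that $S$ is an AIP ring, then show that a local AIP ring is prime. The difference is that the paper disposes of both steps by citation alone --- the first to (Theorem 3.1, \cite{DM}), the second to (Proposition 5.3, \cite{AAK}) --- whereas you prove both from scratch. Your first step is, as you say, the paper's own argument for Proposition \ref{P3.1} with the word ``central'' deleted; your second step (given $0\neq f\in l_S(K)$, pick $g\in l_S(K)$ with $fg=f$, observe $1-g$ cannot be a unit, conclude by locality that $1-g$ lies in the radical and hence $g$ is a unit, whence $K=g^{-1}(gK)=0$) is a clean, self-contained substitute for the cited Proposition 5.3 of \cite{AAK}, and the final passage from $l_S(B)=0$ for all nonzero ideals $B$ to primeness is immediate. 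What the paper's route buys is brevity; what yours buys is self-containment, showing the proposition needs nothing beyond the definition of endo-AIP and the fact that $1-j$ is a unit whenever $j$ lies in the Jacobson radical of a local ring. One small remark: the step $g(T(M))=0\Rightarrow gT=0$ does not rest on any faithfulness hypothesis, contrary to the caveat at the end of your write-up; it is automatic, since $gt(m)=g(t(m))\in g(T(M))=0$ for all $t\in T$ and $m\in M$ says $gt$ is the zero map, and elements of $S=End_R(M)$ coincide exactly when they agree pointwise on $M$ --- the same tautology the paper uses inside Proposition \ref{P3.1}.
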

\begin{proof}
	It is clear from (Theorem 3.1, \cite{DM}) that the endomorphism ring of an endo-AIP module is AIP ring. Thus, $S$ is a local AIP ring. Since, every local AIP ring is a prime ring (Proposition 5.3, \cite{AAK}). Hence, $S$ is a prime ring.
\end{proof}
Recall from \cite{Primeness}, a fully invariant submodule $N\trianglelefteq M$ is said to be a prime submodule of $M$ (in this case $N$ is said to be prime in $M$), if for any ideal $T$ of $S=End_R(M)$, and for any fully invariant submodule $N'\trianglelefteq M$, $T(N')\subset N$ implies $T(M)\subset N$ or $N'\subset N$. Further, a fully invariant submodule $N$ of $M$ is called semiprime submodule if it is an intersection of prime submodules of $M$. A right $R$-module $M$ is called prime module if $\{0\}$ is prime in $M$ while $M$ is called semiprime module if $\{0\}$ is semiprime submodule of $M$.
\begin{proposition}
	Let $M$ be a semiprime (prime) module and $S=End_R(M)$ satisfies ascending chain condition on its principal left ideals. Then, the following conditions are equivalent:
	\begin{enumerate}
		\item[(i)] $M$ is a quasi-Baer module;
		\item[(ii)] $M$ is an endo-AIP module;
		\item[(iii)] $M$ is a centrally endo-AIP module.
	\end{enumerate}
\end{proposition}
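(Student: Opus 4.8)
The plan is to establish the cycle $(i)\Rightarrow(iii)\Rightarrow(ii)\Rightarrow(i)$. The implication $(iii)\Rightarrow(ii)$ is already free from Proposition \ref{CEE1.1}, so the real content lies in $(i)\Rightarrow(iii)$ and in the converse $(ii)\Rightarrow(i)$. I would set up three standing facts first. Since $M$ is a semiprime (prime) module, $S=End_R(M)$ is a semiprime (prime) ring: if $T\trianglelefteq S$ with $T^2=0$ then $T(T(M))=T^2(M)=0$, and as $\{0\}$ is a semiprime submodule this forces $T(M)=0$, whence $T=0$ by faithfulness of the action of $S$ on $M$ (alternatively one cites \cite{Primeness}). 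Next, since $M$ is an endo-AIP module, $S$ is an AIP ring by (Theorem 3.1, \cite{DM}). Finally, in a semiprime ring every left (or right) semicentral idempotent is central, so $\mathbb{S}_l(S)=\mathbb{S}_r(S)$ is exactly the set of central idempotents (the same principle already used in Corollary \ref{CP1.1}).

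For $(i)\Rightarrow(iii)$, let $N\trianglelefteq M$. Since $M$ is quasi-Baer, $l_S(N)\leq^{\oplus}S$, and because $N$ is fully invariant $l_S(N)$ is a two-sided ideal; hence $l_S(N)=Se$ for a left semicentral idempotent $e$. As $S$ is semiprime, $e$ is central, and then for every $\phi\in l_S(N)=Se$ one has $\phi e=\phi=e\phi$ with $e$ a central element of $l_S(N)$. Thus $l_S(N)$ is centrally s-unital and $M$ is centrally endo-AIP. Note that this direction uses semiprimeness but not the chain condition.

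The decisive step is $(ii)\Rightarrow(i)$. Fix $N\trianglelefteq M$; as $N$ is fully invariant, $I:=l_S(N)$ is a two-sided ideal of $S$ which, by hypothesis, is right s-unital. I would first record that the ACC on principal left ideals excludes an infinite family of nonzero orthogonal idempotents: given orthogonal $e_1,e_2,\dots$, the partial sums $f_n=e_1+\cdots+e_n$ are idempotents with $Sf_n\subsetneq Sf_{n+1}$ (since $f_n=f_{n+1}f_n\in Sf_{n+1}$ while $e_{n+1}f_n=0$ prevents $e_{n+1}\in Sf_n$), contradicting the chain condition. The crux is then the lemma that, in a semiprime ring with no infinite orthogonal family of idempotents, a right s-unital ideal $I$ satisfies $I=Se$ for an idempotent $e$, hence is a direct summand. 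To prove it one takes, by ACC, a maximal member $Se_0$ among $\{Se:e^2=e\in I\}$ and argues $I=Se_0$: if some $\phi\in I\setminus Se_0$ existed, then $\phi'=\phi-\phi e_0\in I$ is nonzero with $\phi'e_0=0$; right s-unitality yields $0\neq f\in I$ with $e_0f=fe_0=0$ (replacing $f$ by $f-e_0f-fe_0+e_0fe_0$ to clear both sides), and semiprimeness together with orthogonal finiteness is used to promote $f$ to an idempotent $e_1\in I$ orthogonal to $e_0$ with $Se_0\subsetneq S(e_0+e_1)$, contradicting maximality. Applying this to $I=l_S(N)$ gives $l_S(N)\leq^{\oplus}S$ for every $N\trianglelefteq M$, i.e. $M$ is quasi-Baer.

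The main obstacle I anticipate is exactly this final lemma: manufacturing an honest idempotent inside the right s-unital ideal $I$, since s-unitality supplies only right units, not idempotents, and the passage from $f$ to $e_1$ is where semiprimeness (forced by the semiprime module hypothesis) and orthogonal finiteness (forced by the ACC) must be combined. An alternative that sidesteps the hand computation is to invoke the ring-theoretic classification of semiprime AIP rings in \cite{AAK} (in the spirit of the finite-uniform-dimension case, Theorem 5.1, \cite{AAK}) to conclude directly that the semiprime AIP ring $S$ is quasi-Baer, and then transfer back to $M$. In the prime case the statement degenerates harmlessly: $S$ is prime, hence quasi-Baer since the left annihilator of every ideal is $0$ or $S$, and the three conditions collapse accordingly.
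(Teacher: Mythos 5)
Your skeleton is fine, and two of the three legs are genuinely correct: $(iii)\Rightarrow(ii)$ is immediate from Proposition \ref{CEE1.1}, and your $(i)\Rightarrow(iii)$ (quasi-Baer gives $l_S(N)=Se$ with $e$ semicentral, and semiprimeness of $S$, inherited from semiprimeness of $M$ via \cite{Primeness}, makes $e$ central) is exactly the mechanism the paper uses, except the paper runs it as $(ii)\Rightarrow(iii)$ after first upgrading endo-AIP to quasi-Baer. The genuine gap is in $(ii)\Rightarrow(i)$, and it sits precisely where you flagged it. Your key lemma --- in a semiprime ring with no infinite orthogonal family of idempotents, every right s-unital ideal equals $Se$ for an idempotent $e$ --- is false as stated. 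Take $S=C([0,1])$, the ring of continuous real functions: it is commutative and reduced (hence semiprime), and its only idempotents are $0$ and $1$ (so orthogonal finiteness holds trivially). Let $I$ be the ideal of functions vanishing on a neighbourhood of $0$. Given $f\in I$ vanishing on $[0,\varepsilon]$, choose $x\in I$ with $x=0$ on $[0,\varepsilon/2]$ and $x=1$ on $[\varepsilon,1]$; then $fx=f$, so $I$ is (even centrally) s-unital, yet $I$ is a proper nonzero ideal containing no nonzero idempotent at all. So no argument can ``promote $f$ to an idempotent $e_1$'': in passing from ACC on principal left ideals to its consequence, orthogonal finiteness, you discarded exactly the strength the lemma needs (consistently, $C([0,1])$ fails ACC on principal ideals). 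Even if you restore the full ACC hypothesis, your sketch supplies no mechanism for manufacturing the idempotent; that promotion step is the entire difficulty.

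Your fallback does not close the gap either, for two reasons. First, Theorem 5.1 of \cite{AAK} concerns AIP rings of finite uniform dimension, not rings with ACC on principal left ideals; ``in the spirit of'' is not a proof, and the two finiteness conditions are not comparable. Second, even a ring-level conclusion that $S$ is quasi-Baer would not yield statement (i): quasi-Baerness of $S$ controls $l_S(T)$ for ideals $T$ of $S$, whereas (i) demands $l_S(N)\leq^{\oplus}S$ for every fully invariant submodule $N$ of $M$, and transferring from ring to module is exactly why the paper needs a (locally quasi-)retractability hypothesis in Proposition \ref{LPQ1.1}. The paper avoids both problems by citing the module-level result (Proposition 3.9 of \cite{DM}): an endo-AIP module whose endomorphism ring satisfies ACC on principal left ideals is quasi-Baer. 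That single citation is the whole content of $(ii)\Rightarrow(i)$ and $(iii)\Rightarrow(i)$ in the paper's proof, and your argument becomes correct if you replace your lemma by it. Your closing remark on the prime case is acceptable, but should be argued at module level (primeness of $M$ gives $l_S(N)=0$ for every $0\neq N\trianglelefteq M$ directly), not merely from primeness of $S$.
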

\begin{proof}
	$(i)\Rightarrow (ii)$ It is clear.\\
	$(ii)\Rightarrow (iii)$ Let $M$ be an endo-AIP module, $N$ be a fully invariant submodule of $M$ and $S$ satisfies ascending chain condition on its principal left ideals. Then, from (Proposition 3.9, \cite{DM}) $M$ is a quasi-Baer module. So, $l_S(N)=Se$ for some $e^2=e\in S$. Since $M$ is a semiprime module, so by (Theorem 2.9, \cite{Primeness}) $S$ is a semiprime ring. Thus, from (Proposition 1.17, \cite{Birken}), $\mathbb{S}_r(S)=\mathbb{S}_l(S)$. Hence, $M$ is a centrally endo-AIP module.\\
	$(iii)\Rightarrow (i)$ Since, $M$ is a centrally endo-AIP module and $S$ satisfies ascending chain condition on principal left ideal. As every centrally endo-AIP module is endo-AIP, so from (Proposition 3.9, \cite{DM}) $M$ is a quasi-Baer module.
\end{proof}

\end{document}